\DeclareMathAlphabet{\pazocal}{OMS}{zplm}{m}{n}
\newcolumntype{C}[1]{>{\centering\let\newline\\\arraybackslash\hspace{0pt}}m{#1}}
\numberwithin{equation}{section}
\def\N{\mathbb N}
\DeclareMathOperator{\spa}{span}
\def\remove#1{}
\newtheorem{lemma}{Lemma}[section]
\newtheorem{corollary}[lemma]{Corollary}
\newtheorem{theorem}[lemma]{Theorem}
\newtheorem{proposition}[lemma]{Proposition}
\theoremstyle{definition}
\newtheorem{construction}[lemma]{Construction}
\newtheorem{example}[lemma]{Example}
\theoremstyle{remark}
\newtheorem{remark}[lemma]{Remark}
\newtheorem{interp}[lemma]{Biological interpretation}
\newcolumntype{R}[1]{>{\arraybackslash$}p{#1}<{$}}
\title[Gonosomal algebras and operators]{ 
	Gonosomal algebras and operators associated to genetic systems with a single male genotype
}
\author[Y. Cabrera]{Yolanda Cabrera Casado\textsuperscript{1}\,\orcidlink{0000-0003-4299-4392}}
\address{\textsuperscript{1}Departamento de Matem\'atica Aplicada, E.T.S. Ingenier\'\i a Inform\'atica, Universidad de M\'alaga, Campus de Teatinos s/n, 29071 M\'alaga,   Spain.}
\email{yolandacc@uma.es}
\author[M. Ladra]{Manuel Ladra\textsuperscript{2}\,\orcidlink{0000-0002-0543-4508}}
\address{\textsuperscript{2}Departamento de Matemáticas \& CITMAga, Universidade de Santiago de Compostela, 15782 Santiago de Compostela, Spain}
\email{manuel.ladra@usc.es}
\author[A. Pérez-Rodríguez]{Andrés Pérez-Rodríguez\textsuperscript{3}\,\orcidlink{0009-0007-1095-5328}}
\address{\textsuperscript{3}Departamento de Matemáticas \& CITMAga, Universidade de Santiago de Compostela, 15782 Santiago de Compostela, Spain}
\email{andresperez.rodriguez@usc.es}
\subjclass[2020] {17D92, 17D99, 92D25} 
\keywords{gonosomal algebra, gonosomal operator, sex-determination system, limit points.}
\begin{document}
	\begin{abstract}
		This article is devoted to studying gonosomal algebras and operators with a single male genotype. We compute the limit points of the trajectories of the corresponding normalised gonosomal operators, describing the development of specific populations and providing the corresponding biological interpretations.
	\end{abstract}
	\maketitle
	\section{Introduction}
A population is defined as a group of individuals of the same species that inhabit a specific geographic region at a given time and have the ability to interbreed. The analysis of populations, along with the factors and mechanisms that regulate them, is essential for understanding ecosystems. This task is undertaken by population dynamics, historically recognised as one of the most prominent branches of mathematical biology (see \cite{B_11_popdyn}).

Etherington introduced abstract algebra into the study of genetics in his series of papers~\cite{Etherington_41_Dup,Etherington_40,Etherington_41}.
He defined several classes of non-associative structures, such as baric or train algebras, which have significantly contributed to population genetics. Over time, numerous studies (see \cite{Lyubich_92,Reed_97,Tian_08,TV_06,WB_80}, for example) have confirmed that non-associative algebras are the appropriate mathematical framework for studying genetics. As a result, the term ``genetic algebras'' was coined to refer to these primarily non-associative  algebras, which enable the modelling of inheritance in the field of genetics.

In this paper, we focus on the dynamics of certain bisexual populations. One of the main challenges when constructing algebraic models for sex-linked inheritance in such populations lies in the diversity of sex-determination systems, which are the biological mechanisms that determine the development of sexual characteristics in organisms. Although sex can be determined by environmental factors (such as temperature \cite{V_19_temperature}), it is primarily controlled by a pair of chromosomes known as gonosomes. The most widely known sex-determination system is the male heterogametic XX/XY system found in most mammals in which XY individuals develop as males and XX individuals develop as females. Other organisms, such as birds, have a ZZ/ZW chromosomal system in which heterogametic ZW individuals develop as females and homogametic ZZ individuals develop as males. In fact, there even exist polygenic sex determination systems (see \cite{MR_13}), where the sex of an organism is influenced by multiple genes rather than being determined by a single gene or chromosome.

Ladra and Rozikov established the foundation for algebraically modelling the evolution of bisexual populations in \cite{LR_13} by introducing the evolution algebra of a bisexual population (EABP). However, certain sex-determination systems, such as haemophilia (see \cite[Example~1]{V_16}), cannot be accurately modelled using an EABP. To address this limitation, in \cite{V_16}, Varro extends the definition of EABP by introducing gonosomal algebras. Furthermore, he illustrates several constructions of gonosomal algebras with nearly twenty genetic examples, demonstrating their ability to algebraically represent a wide range of genetic phenomena related to sex.

Every gonosomal algebra gives rise to a quadratic operator known as gonosomal operator, which connects the genetic states of two successive generations. However, this operator does not map the simplex into itself (as illustrated, for instance, in the case of haemophilia in \cite[Lemma~1]{RV_15}), complicating the biological interpretation of its dynamics. To deal with this issue, the normalised gonosomal operator is introduced, which, as we will see later, ensures that the simplex is mapped into itself. This normalised operator links the frequency distributions of genetic traits across consecutive generations. A key challenge lies in determining the limit points of the trajectories generated by these operators from an arbitrary initial point. While there are several studies exploring the dynamical behaviour of specific operators (see \cite{A_19_asymp,A_21_attract,AR_20,A_20_eigen,RSV_24,RV_15}, for example), the absence of a general method for analysing non-linear discrete dynamical systems further complicates the problem, making it a particularly difficult and open area of research.

In \cite{V_16}, examples of sex-determination systems with a single male genotype are presented. 

Another example, involving finitely many female genotypes and only one male genotype, is analysed in detail from an algebraic point of view in \cite{LLR_14}. Motivated by these cases, this paper focuses on studying gonosomal algebras and operators that model these particular genetic systems. We compute the limit points of the trajectories of the normalised gonosomal operators, illustrating the development of specific populations and deriving the corresponding biological interpretations.

The text is structured into five sections. Following this introduction, Section \ref{prelim} covers the preliminaries, reviewing the basic language and the fundamental concepts of gonosomal algebras and (normalised) gonosomal operators within the context of genetic systems with a single male genotype. 
Subsequently, Sections \ref{wolbachia} and \ref{lemming} build upon such genetic examples given in \cite{V_16}. In particular, in Section \ref{wolbachia} we explore a ZW sex-determination system in which infection by the bacterium \textit{Wolbachia} induces ZZ individuals to develop as females, and we analyse its development in terms of Wolbachia's transmission rate to the offspring. Section~\ref{lemming} is devoted to the study of XY sex-determination systems that model the population of some rodents, such as the African pygmy mouse or the Arctic lemming. Notably, in both sections, we have been pioneers in restricting the domain of certain normalised gonosomal operators to obtain meaningful biological interpretations. Finally, in Section \ref{pez}, we provide the first complete mathematical model of African cichlid fish populations, addressing a case that had not been previously explored in the literature.

	\section{Preliminaries on gonosomal algebras and operators}\label{prelim}
	
	Throughout this paper, we denote the set of natural numbers by $\N=\{0,1,\ldots\}$ and $\N^*:=\N\setminus \{0\}$. Moreover, we denote $\Lambda:=\{1, \ldots,n\}$.
    Given a  field $\mathbb{K}$ with characteristic different from $2$, a $\mathbb{K}$-algebra $\mathcal{A}$ is \textit{gonosomal} of type $(n,m)$ if it admits a basis $(e_i)_{1\leq i\leq n}\cup(\widetilde{e}_p)_{1\leq p\leq m}$ such that for all $1\leq i,j\leq n$ and $1\leq p,q\leq m$ we have that
	\[
	e_ie_j=0,\quad \widetilde{e}_p\widetilde{e}_q=0 \quad\text{and}\quad e_{i}\widetilde{e}_{p}=\widetilde{e}_{p}e_{i}= \sum_{k=1}^{n}\gamma_{ipk}e_{k}+\sum_{r=1}^{m}\widetilde{\gamma}_{ipr}\widetilde{e}_{r},\]
	where $\sum_{k=1}^{n}\gamma_{ipk}+\sum_{r=1}^{m}\widetilde{\gamma}_{ipr}=1$. The basis $(e_i)_{1\leq i\leq n}\cup(\widetilde{e}_p)_{1\leq p\leq n}$ is called a \textit{gonosomal basis} of $\mathcal{A}$. All gonosomal algebras considered in this work will be \textit{stochastic}, that is $K=\mathbb{R}$ and $\gamma_{ipk},\widetilde{\gamma}_{ipr} \geq 0$, and will model sex-determination genetic systems with a single male genotype. Hence, from now on, we will assume that all the gonosomal algebras are $\mathbb{R}$-algebras which admit a gonosomal basis $(f_i)_{i\in \Lambda}\cup h$ such that for all $ i,j \in \Lambda$ we have that $f_if_j=hh=0$ and $f_ih=hf_i=\sum_{k\in \Lambda}\gamma_{ik}f_k+\widetilde{\gamma}_ih$, where $\gamma_{ij},\widetilde{\gamma}_i\geq0$ and $\sum_{k\in \Lambda}\gamma_{ik}+\widetilde{\gamma}_i=1$.

Mainly (but not solely), we will focus on those gonosomal algebras, which can be realised as the commutative duplicate of a baric algebra (see \cite[Subsection 4.2]{V_16}). Next, after revising some necessary background and fixing some notation, we recall what this construction consists in already in the context of genetic systems with a single male genotype.

	A $\mathbb{K}$-algebra $\mathcal{A}$ is called \textit{baric} if it admits a non-zero algebra morphism $\omega\colon\mathcal{A}\longrightarrow\mathbb{K}$, which is called a \textit{weight function} of $\mathcal{A}$.  By \cite[Lemma 1.10]{WB_80}, a $n$-dimensional $\mathbb{K}$-algebra $\mathcal{A}$ is baric if and only if it admits a basis $B=\{e_i\}_{i \in \Lambda}$ such that $e_ie_j=\sum_{k\in \Lambda} \gamma_{ijk}e_k$ with $\sum_{k\in \Lambda}\gamma_{ijk}=1$ for all $i,j \in \Lambda$.
	Moreover, given a commutative $\mathbb{K}$-algebra $\mathcal{A}$ (non-necessarily baric), the quotient space  $D(\mathcal{A})=(\mathcal{A} \otimes \mathcal{A}) / I$, where $I=\spa\{x \otimes y-y \otimes x\colon x,y \in \mathcal{A}\}$  endowed with the component-wise multiplication is said to be the \textit{commutative duplicate} of $\mathcal{A}$.  By $x \otimes y$, we will denote the elements of $D(\mathcal{A})$. Lastly, we recall the surjective morphism $\mu\colon D(\mathcal{A})\longrightarrow\mathcal{A}^2$, $x\otimes y\mapsto xy$, which is called the \textit{Etherington morphism}.
	
	\begin{construction}[{\cite[Proposition 12]{V_16}}] \label{const_1}
		Let $\mathcal{A}$ be a baric $\mathbb{R}$-algebra with basis $\{e_i\}_{i\in \Lambda}$ and multiplication given by $e_je_i=e_ie_j=\sum_{k\in \Lambda}\gamma_{ijk}e_k$ such that $\sum_{k\in \Lambda}\gamma_{ijk}=1$ for any $i,j \in \Lambda$, and $D(\mathcal{A})$ the commutative duplicate of $\mathcal{A}$.
		Let $\Gamma=\Lambda\times\Lambda$ and consider a subset $\Omega\varsubsetneq\Gamma$ and a pair $(k,l)\in\Gamma\backslash\Omega$.
		Let $F=\spa\{f_{rs}=e_r\otimes e_s\colon (r,s)\in\Omega\}$ and $M=\spa\{h=e_k\otimes e_l\}$ be two subspaces of $D(\mathcal{A})$ such that
		\[\mu(F)\otimes\mu(M)=\spa\{e_re_s \otimes e_ke_l\colon(r,s)\in\Omega\}\subset F\oplus M.\] 
		Then, the subspace $F\oplus M\subset D(\mathcal{A})$ with the product given by
		\begin{align*}
			f_{rs}h&:=(e_re_s)\otimes(e_ke_l)=\left(\sum_{p=1}^{n}\gamma_{rsp}e_p\right)\otimes\left(\sum_{q=1}^{n}\gamma_{klq}e_q\right)
			=\sum_{(p,q)\in\Lambda}\gamma_{rsp}\gamma_{klq}f_{pq}+\gamma_{rsk}\gamma_{kll}h,
		\end{align*}
		for any $(r,s)\in\Omega$ and zero in another case, is a gonosomal algebra with basis $(f_{rs})_{(r,s)\in\Lambda}\cup h$. 
	\end{construction}
	
	Nevertheless, the following example shows that not every genetic system with a single male genotype can be modelled using only Construction \ref{const_1}.
	
	\begin{example}[{\cite[Example 16]{V_16}}]\label{ej:art_lem} 
		If we identify $e_1\leftrightarrow X$, $e_2\leftrightarrow X^*$ and $e_3\leftrightarrow Y$, in the particular case of \textit{Dicrostonyx torquatus} (Arctic lemming), the result of one of the crosses is
		\begin{align}\label{prod_artlem}
			(e_2\otimes e_3)(e_1\otimes e_3)=\frac{1}{3}e_1\otimes e_2+\frac{1}{3}e_2\otimes e_3+\frac{1}{3}e_1\otimes e_3.
		\end{align}
		Now, suppose there exists a baric algebra with basis $\{e_1,e_2,e_3\}$ such that gives rise to \eqref{prod_artlem} by following Construction \ref{const_1} where $F=\spa\{e_1\otimes e_1, e_1\otimes e_2, e_2\otimes e_3\}$ and $M=\spa\{e_1\otimes e_3\}$. We can write $\mu(e_2\otimes e_3)=e_2e_3=\alpha_1e_1+\alpha_2e_2+\alpha_3e_3$ and $\mu(e_1\otimes e_3)=e_1e_3=\beta_1e_1+\beta_2e_2+\beta_3e_3$ with $\alpha_1+\alpha_2+\alpha_3=\beta_1+\beta_2+\beta_3=1$. Thus, we have that 
		\begin{align*}
			(e_2\otimes e_3)(e_1\otimes e_3)&=\alpha_1\beta_1e_1\otimes e_1+\alpha_2\beta_2e_2\otimes e_2+\alpha_3\beta_3e_3\otimes e_3\\
			 & \ \, \ {}+(\alpha_1\beta_2+\alpha_2\beta_1)e_1\otimes e_2+(\alpha_1\beta_3+\alpha_3\beta_1)e_1\otimes e_3+(\alpha_2\beta_3+\alpha_3\beta_2)e_2\otimes e_3.
		\end{align*}
		From \eqref{prod_artlem} and the previous expression, we get that $\alpha_1\beta_1=\alpha_2\beta_2=\alpha_3\beta_3=0$ and $\alpha_1\beta_2+\alpha_2\beta_1=\alpha_1\beta_3+\alpha_3\beta_1=\alpha_2\beta_3+\alpha_3\beta_2=\frac{1}{3}$. However, this system of equations does not admit any solution, which yields that such baric algebra does not exist.
	\end{example}
	
	Given the previous example, we revise how, given a gonosomal algebra, we can obtain others by reducing its gonosomal basis (see \cite[Subsection 4.1]{V_16}), already in the framework of genetic systems with a single male genotype, too.

	\begin{construction}[{\cite[Propostion 10]{V_16}}]\label{const_2}
		Let $\mathcal{A}$ be a gonosomal $\mathbb{K}$-algebra with gonosomal basis $(f_i)_{i \in \Lambda}\cup h$ and product given by $f_ih=\sum_{k \in \Lambda}\gamma_{ik}f_k+\widetilde{\gamma}_ih$. If there is a subset $I\subsetneq\Lambda$ such that for all $i\in\Lambda\backslash I$  we have $\sigma_{i}:=1-\sum_{k\in I}\gamma_{ik}\neq0$, then the subspace spanned by $(f_i)_{i\in\Lambda\backslash I}\cup h$ with the product given by 
		\[f_i\ast h=\sigma_{i}^{-1}\left(\sum_{k\in\Lambda\backslash I}\gamma_{ik}f_k+\widetilde{\gamma_i}h\right)\]
		and $f_i\ast f_j=h\ast h=0$ for all $i,j\in\Lambda\backslash I$, is a gonosomal algebra.
	\end{construction}
	In Section \ref{sec:artic_lemming}, we will see how Example \ref{ej:art_lem} can be modelled by combining Constructions~\ref{const_1} and \ref{const_2}. In fact, all the genetic examples considered in this work can be obtained by using any of such procedures or a combination of both.
	
	Next, we recall how a gonosomal algebra is associated with a gonosomal evolution operator and vice versa. Following \cite[Section 10.1]{R_19}, given the inheritance real coefficients $\{\gamma_{ij}\}_{i,j\in\Lambda}$ and $\{\widetilde{\gamma}_i\}_{i\in\Lambda}$ of a gonosomal algebra, the corresponding \textit{gonosomal evolution operator} is defined in coordinate form by $V\colon\mathbb{R}^{n+1}\rightarrow\mathbb{R}^{n+1},(x_1,\dots,x_n,u)\mapsto(x_1^\prime,\dots,x_n^\prime,u^\prime)$,
	\begin{align}\label{gon_op}
		V: \left\{
		\begin{array}{lcl}
			x_k^\prime & = & u\sum_{i \in \Lambda }\gamma_{ik}x_i, \quad
			k \in \Lambda; \medskip\\
			u^\prime & = & u\sum_{i \in \Lambda }\widetilde{\gamma}_ix_i.
		\end{array}
		\right. 
	\end{align}
	As explained in \cite[Section 10.3]{R_19}, the operator $V$ does not map the simplex $S^n$ of $\mathbb{R}^{n+1}$ to itself. For this reason, is necessary to define the corresponding \textit{normalised gonosomal evolution operator} $\widetilde{V}$ with coefficients $\gamma_{ij}$ and $\widetilde{\gamma}_i$ over the set $S^{n}$ in coordinate form as
	\begin{equation}\label{gon_op_norm}
		\widetilde{V}: \left\{
		\begin{array}{l}
			x'_j=\dfrac{u\sum_{i \in \Lambda}\gamma_{ij}x_i}{u\sum_{i \in \Lambda}x_i}, \quad
			j \in \Lambda\medskip\\
			u'=\dfrac{u\sum_{i \in \Lambda}\widetilde{\gamma}_ix_i}{u\sum_{i \in \Lambda}x_i}.
		\end{array}
		\right. 
	\end{equation}
	Notice that this operator $\widetilde{V}$  maps the simplex $S^n$ to itself. 
    Furthermore, for applications in genetics, we work with the subset \[S^{n,1}=\left\{(x_1,\dots,x_n,u)\in\mathbb{R}_{\geq0}^{n+1}\colon \sum_{i \in \Lambda }x_i>0,u>0,\sum_{i \in \Lambda }x_i+u=1\right\}\subset S^n\]
	whose elements can be interpreted as the frequency distributions of the genotypes $f_i$ and $h$. We know that $\widetilde{V}$ also maps $S^{n,1}$ to itself if and only if $(\gamma_{i1},\dots,\gamma_{in},\widetilde{\gamma}_i)\in S^{n,1}$ for any $i \in \Lambda$ (see \cite[Proposition~10.3]{R_19}). Moreover, we have that if $\widetilde{V}$ maps $T\subset S^{n,1}$ to itself then, given an arbitrary initial point $s=(x_1,\dots,x_n,u) \in T $, it holds that $u^{(k)}>0$ for any $k\in \N$. Consequently, the study of the trajectories of the normalised gonosomal evolution operator $\widetilde{V}$ is equivalent to the study of those of the following operator:
   \begin{equation}\label{op_simp}
		 \left\{
		\begin{array}{l}
			x'_j=\dfrac{\sum_{i \in \Lambda}\gamma_{ij}x_i}{\sum_{i \in \Lambda}x_i}, \quad
			j \in \Lambda\medskip\\
			u'=\dfrac{\sum_{i \in \Lambda}\widetilde{\gamma}_ix_i}{\sum_{i \in \Lambda}x_i}.
		\end{array}
		\right. 
	\end{equation}

   From now on, if $T$ is an invariant subset of $S^{n,1}$ with respect to $\widetilde{V}$, we will use both operators \eqref{gon_op_norm} and \eqref{op_simp} interchangeably.

    On the other hand, as shown in \cite[Proposition~10.4]{R_19}, there exists a one-to-one correspondence between non-zero, non-negative fixed points, that is, non-zero fixed points with all non-negative coordinates (equivalently non-negative and normalisable fixed points) of \eqref{gon_op} and the fixed points of \eqref{gon_op_norm}. Concretely, $(x_1,\dots,x_n,u)$ is a non-negative and normalisable fixed point of \eqref{gon_op} if and only if ${(\sum_{i \in \Lambda }x_i+u)}^{-1} (x_1,\dots,x_n,u)$  is a fixed point of \eqref{gon_op_norm}.

    In this work, we only use the operator \eqref{gon_op} to compute the fixed points of  \eqref{gon_op_norm}, which will be essential for studying the limit points of the trajectories
	\[
	\big\{s^{(k)}=\widetilde{V}^k(s)=(x_1^{(k)},\dots,x_n^{(k)},u^{(k)})\big\}_{k\in \N}
	\] 
	for an arbitrary initial point $s=s^{(0)}\in S^{n,1}$ and for drawing the corresponding biological interpretations.

		\section{ZW system with male feminization}\label{wolbachia}
	This section is dedicated to studying the ZW sex-determination system that Woodlice follows (see \cite{CMFRB_04}). \textit{Wolbachia}  is an intracellular maternally inherited bacteria affecting a wide range of arthropods. In the case of woodlice of the \textit{Armadillium vulgare} species, \textit{Wolbachia} is responsible for the feminisation of genetic males. When Wolbachia infects a male with genotype ZZ, denoted by ZZ+w, it becomes a female, which can cross with a male ZZ.  So, in this population, there are three female genotypes: $f_1\leftrightarrow$ ZZ + w, $f_2\leftrightarrow$ ZW and $f_3\leftrightarrow$ ZW+w, and a male genotype $h\leftrightarrow$ ZZ. As explained in detail in \cite[Example 15]{V_16}, the results of crosses can be retrieved by a gonosomal algebra obtained by Construction \ref{const_1} with gonosomal basis $\{f_1,f_2,f_3,h\}$ and product given by 
	\begin{align}\label{al_wolbachia}
		f_1h=\eta f_1+(1-\eta)h,  \quad f_2h=\frac{1}{2}f_2+\frac{1}{2}h,\quad
		f_3h=\frac{\eta}{2}f_1+\frac{1-\eta}{2}f_2+\frac{\eta}{2}f_3+\frac{1-\eta}{2}h
	\end{align}
	where $\eta$ ($\frac{1}{2} < \eta < 1$) denotes the transmission rate of Wolbachia in the offspring. Consider the associated gonosomal evolution operator $V_\eta$, with $\frac{1}{2} \leq \eta \leq 1$, given by
	\begin{equation}\label{op_wolbachia}
		V_{\eta}: \left\{
		\begin{array}{rclrcl}
			x_1'&=&\eta x_1u+\frac{\eta}{2}x_3u; & x_3'&=& \frac{\eta}{2}x_3u;\medskip \\
			x_2'&=&\frac{1}{2}x_2u+\frac{1-\eta}{2}x_3u; & u'&=&(1-\eta) x_1u+\frac{1}{2}x_2u+\frac{1-\eta}{2}x_3u.
		\end{array}
		\right.
	\end{equation}
	
	\begin{proposition}\label{prop:fix_points_wolbachia}
    The operator $V_\eta$, with $\frac{1}{2} < \eta < 1$, has three non-zero fixed points: $(\frac{2}{2-\eta},\frac{2}{2-\eta},\frac{-2}{2-\eta},\frac{2}{\eta})$, $(0,2,0,2)$ and $(\frac{1}{1-\eta},0,0,\frac{1}{\eta})$.
    
		Moreover, $V_{\frac{1}{2}}$ and  $V_{1}$ have infinitely many non-zero fixed points: the first one has $(\frac{4}{3},\frac{4}{3},-\frac{4}{3},4)$ and the family $(\rho,2-\rho,0,2)$ with $\rho\in\mathbb{R}$ and the second one has the family $(\beta,2,-\beta,2)$ with  $\beta\in\mathbb{R}$.
	\end{proposition}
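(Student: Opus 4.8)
The plan is to compute the fixed points of $V_\eta$ directly, that is, to solve the system obtained from \eqref{op_wolbachia} by imposing $(x_1',x_2',x_3',u')=(x_1,x_2,x_3,u)$,
\begin{align*}
	x_1 &= \eta x_1u+\tfrac{\eta}{2}x_3u, & x_2 &= \tfrac12 x_2u+\tfrac{1-\eta}{2}x_3u,\\
	x_3 &= \tfrac{\eta}{2}x_3u, & u &= (1-\eta)x_1u+\tfrac12 x_2u+\tfrac{1-\eta}{2}x_3u,
\end{align*}
and to discard the trivial solution $(0,0,0,0)$. The organising idea is to factor the equations. First, the $x_3$-equation reads $x_3\bigl(1-\tfrac{\eta}{2}u\bigr)=0$, which splits the analysis into the branch $u=\tfrac2\eta$ and the branch $x_3=0$. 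Moreover, at a non-zero fixed point one necessarily has $u\ne 0$ (if $u=0$ the first three equations force $x_1=x_2=x_3=0$), so the $u$-equation may always be divided by $u$.

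I would first treat the generic range $\tfrac12<\eta<1$. In the branch $u=\tfrac2\eta$, plugging this value into the $x_1$- and $x_2$-equations and cancelling the factor $1-\eta$ --- which is legitimate precisely because $\eta<1$ --- forces $x_3=-x_1$ and $x_2=x_1$; the normalised $u$-equation $1=(1-\eta)x_1+\tfrac12 x_2+\tfrac{1-\eta}{2}x_3$ then reduces to $1=\tfrac{2-\eta}{2}x_1$, giving the fixed point $\bigl(\tfrac{2}{2-\eta},\tfrac{2}{2-\eta},\tfrac{-2}{2-\eta},\tfrac{2}{\eta}\bigr)$. In the branch $x_3=0$ the system becomes $x_1(1-\eta u)=0$, $x_2\bigl(1-\tfrac u2\bigr)=0$ and $1=(1-\eta)x_1+\tfrac12 x_2$; enumerating the factor possibilities rules out $x_1=x_2=0$, and the surviving options $u=2$ and $u=\tfrac1\eta$ produce $(0,2,0,2)$ and $\bigl(\tfrac{1}{1-\eta},0,0,\tfrac1\eta\bigr)$, respectively, while $u=2=\tfrac1\eta$ is impossible for $\eta\ne\tfrac12$. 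These three points are pairwise distinct (their $u$-coordinates are $\tfrac2\eta\in(2,4)$, $2$, and $\tfrac1\eta\in(1,2)$), and the enumeration shows there are no others.

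For the endpoints I would simply revisit where the argument used the strict inequalities. When $\eta=\tfrac12$ the conditions $u=2$ and $u=\tfrac1\eta$ coincide, so in the branch $x_3=0$ the $u$-equation collapses to $x_1+x_2=2$ with $u=2$, producing the family $(\rho,2-\rho,0,2)$, while the branch $u=\tfrac2\eta=4$ still gives the isolated point $\bigl(\tfrac43,\tfrac43,-\tfrac43,4\bigr)$. When $\eta=1$ the factor $1-\eta$ vanishes: in the branch $u=\tfrac2\eta=2$ the $x_2$-equation turns into an identity, so the $u$-equation forces $x_2=u=2$ while $x_1$ is free with $x_3=-x_1$, yielding the family $(\beta,2,-\beta,2)$; the branch $x_3=0$ only reproduces the member $\beta=0$, and the quantity $\tfrac{1}{1-\eta}$ ceases to be defined.

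All the calculations here are elementary. The only part demanding genuine care is the bookkeeping of the degeneracies at $\eta\in\{\tfrac12,1\}$: one has to keep precise track of which manipulations break down at the endpoints --- the cancellation of $1-\eta$, the coincidence $\tfrac1\eta=2$, and the well-definedness of $\tfrac1{1-\eta}$ --- so that the isolated fixed points and the one-parameter families are correctly identified and separated.
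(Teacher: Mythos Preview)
Your proof is correct and follows essentially the same approach as the paper: both factor the $x_3$-equation to split into the branches $x_3=0$ and $u=\tfrac{2}{\eta}$, and then carry out the same case analysis on the remaining factored equations. Your treatment of the endpoint cases $\eta\in\{\tfrac12,1\}$ is in fact more explicit than the paper's, which simply asserts that the listed families are ``easy to check''.
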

	\begin{proof}
		We need to solve the system of equations given by
		\begin{equation}\label{sist_fix_points_wolb}
			\left\{
			\begin{array}{rclrcl}
				x_1&=&\eta x_1u+\frac{\eta}{2}x_3u;\quad  & x_3&=& \frac{\eta}{2}x_3u; \medskip\\
				x_2&=&\frac{1}{2}x_2u+\frac{1-\eta}{2}x_3u;\quad &
				u&=&(1-\eta) x_1u+\frac{1}{2}x_2u+\frac{1-\eta}{2}x_3u.
			\end{array}
			\right.
		\end{equation}
		First, assume that $\frac{1}{2}<\eta<1$. If $x_3=0$, it is deduced that $x_1=0$ or $u=\frac{1}{\eta}$. On the one hand, if $x_1=0$, we get either $u=0$ or $x_2=2$. But if $x_3=x_1=u=0$, then necessarily $x_2=0$. However, if $x_2=2$, we have the non-trivial solution $(0,2,0,2)$. On the other hand, if $u=\frac{1}{\eta}$ then we obtain that $x_2=0$ because $\eta\neq\frac{1}{2}$. Finally, as $\eta\neq1$, then $x_1=\frac{1}{1-\eta}$. Therefore, $(\frac{1}{1-\eta},0,0,\frac{1}{\eta})$ is another solution of \eqref{sist_fix_points_wolb}. Next, assume that $x_3\neq0$. Hence, $u=\frac{2}{\eta}$. This implies that $x_1=x_2=-x_3$. We conclude that $x_1=\frac{2}{2-\eta}$. Thus, we obtain the solution $(\frac{2}{2-\eta},\frac{2}{2-\eta},\frac{-2}{2-\eta},\frac{2}{\eta})$.

        Now, let's the extreme cases $\eta=\frac{1}{2}$ and $\eta=1$. It is easy to check that, for $\eta=\frac{1}{2}$, the solutions of \eqref{sist_fix_points_wolb} are the points $(\frac{4}{3},\frac{4}{3},-\frac{4}{3},4)$ and $(\rho,2-\rho,0,2)$ with $\rho\in\mathbb{R}$ and for $\eta=1$ the solutions are the family of points  $(\beta,2,-\beta,2)$ with  $\beta\in\mathbb{R}$.
			\end{proof}

		Once the fixed points have been obtained, we consider the normalised version of \eqref{op_wolbachia}, that is 
	\begin{equation} \label{op_wolbachia_norm}
		\widetilde{V}_\eta:\left\{
		\begin{array}{rclcrcl}
			x_1'&=&\frac{\eta u( 2 x_1+ x_3)}{2u(x_1+x_2+x_3)}; &\quad& x_3'&=&  \frac{\eta ux_3}{2u(x_1+x_2+x_3)};\medskip\\
			x_2'&=&\frac{u(x_2+(1-\eta)x_3)}{2u(x_1+x_2+x_3)}; &\quad& u'&=&  \frac{u((1-\eta)(2x_1+x_3)+x_2)}{2u(x_1+x_2+x_3)}.\\	
		\end{array}
		\right.
	\end{equation}
    Next, we study the limit points of the different trajectories for arbitrary initial points.

    \vspace{0.3cm}
   \textbf{Case 1: $\frac{1}{2}<\eta<1$.}
    
    In this case, it is clear that $\widetilde{V}_\eta$ maps $S^{3,1}$ to itself and that its fixed points are $(0,\frac{1}{2},0,\frac{1}{2})$ and $(\eta,0,0,1-\eta)$.

	\begin{proposition}\label{lem:desig}
		Consider the operator $\widetilde{V}_\eta$ defined by \eqref{op_wolbachia_norm} with $\frac{1}{2}<\eta<1$. Then, for any initial point $s=(x_1^{(0)},x_2^{(0)},x_3^{(0)},u^{(0)}) \in S^{3,1}$, the following assertions hold:
		\begin{enumerate}[\rm (i)]
			\item $ \frac{1}{2} \leq x_1^{(k)}+x_2^{(k)} \leq \eta$ for all $k\geq 1$;\smallskip
			\item $x_3^{(k)} \leq \eta$ and $x_3^{(k+1)}\leq x_3^{(k)}$  for all $k\geq 1$;\smallskip
			\item there exists a natural number $n_0$ such that $x_2^{(k)}+(1+\eta)x_3^{(k)}  \leq 2 \eta (x_2^{(k)} + x_3^{(k)})$ for all $k \geq n_0$; and\smallskip
			\item there exists a natural number $m_0$ such that $x_1^{(k+1)}\geq x_1^{(k)}$  for all $k\geq m_0$.
		\end{enumerate} 
	\end{proposition}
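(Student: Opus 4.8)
The plan is to work throughout on $S^{3,1}$ using the operator \eqref{op_simp}, which is legitimate here because $\widetilde V_\eta$ maps $S^{3,1}$ into itself, so every iterate $s^{(k)}=(x_1^{(k)},x_2^{(k)},x_3^{(k)},u^{(k)})$ stays in $S^{3,1}$ and $\sigma^{(k)}:=x_1^{(k)}+x_2^{(k)}+x_3^{(k)}>0$ for all $k$. First I would record, for a generic point of $S^{3,1}$ with $\sigma=x_1+x_2+x_3$, the identities obtained by a one-line computation from \eqref{al_wolbachia} and \eqref{op_simp}:
\begin{gather*}
x_1'+x_2'=\frac{2\eta x_1+x_2+x_3}{2\sigma},\qquad x_3'=\frac{\eta x_3}{2\sigma},\\
\sigma'=\frac{2\eta x_1+x_2+(1+\eta)x_3}{2\sigma},\qquad x_1'-x_1=\frac{2x_1(\eta-\sigma)+\eta x_3}{2\sigma}.
\end{gather*}
Everything then follows from these together with $x_i\ge0$, $\sigma>0$ and $\tfrac12<\eta<1$, and the logical order will be (i)$\Rightarrow$(ii) and (iii)$\Rightarrow$(iv).

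For (i), from the first identity and $\sigma\ge x_1$ one has $2\eta x_1+x_2+x_3-\sigma=(2\eta-1)x_1\ge0$, giving $x_1'+x_2'\ge\tfrac12$, while $2\eta\sigma-(2\eta x_1+x_2+x_3)=(2\eta-1)(x_2+x_3)\ge0$ gives $x_1'+x_2'\le\eta$; applying this to $s^{(k-1)}\in S^{3,1}$ yields (i) for $k\ge1$. For (ii), $x_3\le\sigma$ gives $x_3'=\tfrac{\eta x_3}{2\sigma}\le\tfrac{\eta}{2}\le\eta$, hence $x_3^{(k)}\le\eta$ for $k\ge1$; and by (i), $\sigma^{(k)}\ge x_1^{(k)}+x_2^{(k)}\ge\tfrac12$ for $k\ge1$, so $x_3^{(k+1)}=\tfrac{\eta x_3^{(k)}}{2\sigma^{(k)}}\le\eta\,x_3^{(k)}\le x_3^{(k)}$ since $\eta<1$; this moreover shows $x_3^{(k)}\to0$ geometrically.

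For (iii), the degenerate case $x_2^{(0)}=x_3^{(0)}=0$ forces $x_2^{(k)}=x_3^{(k)}=0$ for all $k$ and the inequality is trivial, so assume otherwise. Since $x_2'\ge\tfrac{x_2}{2\sigma}>0$ whenever $x_2>0$, and $x_2^{(1)}>0$ as soon as $x_3^{(0)}>0$, we get $x_2^{(k)}>0$ for all $k\ge k_1$ with $k_1\in\{0,1\}$, so the ratio $r^{(k)}:=x_3^{(k)}/x_2^{(k)}$ is defined for $k\ge k_1$ and satisfies
\[
r^{(k+1)}=\frac{\eta\,x_3^{(k)}}{x_2^{(k)}+(1-\eta)x_3^{(k)}}=\frac{\eta\,r^{(k)}}{1+(1-\eta)r^{(k)}}\le\eta\,r^{(k)}.
\]
Hence $r^{(k)}\to0$, so there is $n_0\ge k_1$ with $r^{(k)}\le\tfrac{2\eta-1}{1-\eta}$ (a positive number since $\tfrac12<\eta<1$) for all $k\ge n_0$; clearing denominators this reads $(1-\eta)x_3^{(k)}\le(2\eta-1)x_2^{(k)}$, which is exactly the claimed inequality $x_2^{(k)}+(1+\eta)x_3^{(k)}\le2\eta(x_2^{(k)}+x_3^{(k)})$.

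For (iv), the key observation is that the inequality in (iii) is precisely the statement $\sigma^{(k+1)}\le\eta$: by the $\sigma'$-identity, $x_2^{(k)}+(1+\eta)x_3^{(k)}\le2\eta(x_2^{(k)}+x_3^{(k)})$ forces $\sigma^{(k+1)}\le\tfrac{2\eta x_1^{(k)}+2\eta(x_2^{(k)}+x_3^{(k)})}{2\sigma^{(k)}}=\eta$. Therefore $\sigma^{(k)}\le\eta$ for all $k\ge n_0+1$, and then the last identity gives $x_1^{(k+1)}-x_1^{(k)}=\tfrac{2x_1^{(k)}(\eta-\sigma^{(k)})+\eta x_3^{(k)}}{2\sigma^{(k)}}\ge0$, so (iv) holds with $m_0=n_0+1$. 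The routine parts are the preliminary identities and items (i)--(ii); I expect the main obstacle to be (iii), namely realising that on passing to the ratio $r=x_3/x_2$ the induced scalar map $r\mapsto\eta r/(1+(1-\eta)r)$ drives every nonnegative orbit to $0$, so the threshold $\tfrac{2\eta-1}{1-\eta}$ is eventually undershot---after which (iii) reformulated as $\sigma'\le\eta$ makes (iv) immediate.
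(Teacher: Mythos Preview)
Your proof is correct. Items (i), (ii) and (iv) proceed along exactly the same lines as the paper: in (i) you compare $2\eta x_1+x_2+x_3$ with $\sigma$ and $2\eta\sigma$; in (ii) you use $\sigma^{(k)}\ge\tfrac12$ from (i) to get $x_3^{(k+1)}\le\eta x_3^{(k)}$; and in (iv) both you and the paper deduce $\sigma^{(k)}\le\eta$ from (iii) and then read off $x_1^{(k+1)}\ge x_1^{(k)}$ from the identity for $x_1'-x_1$.

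The only genuine difference is in (iii). The paper argues by contradiction: assuming $(1-\eta)x_3^{(k)}>(2\eta-1)x_2^{(k)}$ for all $k$, it pushes this inequality backwards step by step to obtain
\[
(1-\eta)\Bigl(1-\sum_{j=1}^{l}\eta^{j}\Bigr)x_3^{(k-l)}>(2\eta-1)\,x_2^{(k-l)},
\]
and then exploits $\sum_{j\ge1}\eta^{j}>1$ (since $\eta>\tfrac12$) to force the left-hand side negative, a contradiction; a separate short check then shows the inequality persists once it holds. Your forward approach via the ratio $r^{(k)}=x_3^{(k)}/x_2^{(k)}$ and the clean contraction $r^{(k+1)}=\eta r^{(k)}/(1+(1-\eta)r^{(k)})\le\eta r^{(k)}$ is more direct: it immediately gives $r^{(k)}\to0$, hence the threshold $r^{(k)}\le\tfrac{2\eta-1}{1-\eta}$ is eventually met and automatically persists. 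Your route also yields the extra information (already implicit in your proof of (ii)) that $x_3^{(k)}\to0$ geometrically, which the paper only extracts later. The paper's argument, on the other hand, avoids having to single out the degenerate case $x_2^{(0)}=x_3^{(0)}=0$ and the well-definedness of the ratio.
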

	
	\begin{proof}
		Take $s=(x_1^{(0)},x_2^{(0)},x_3^{(0)},u^{(0)}) \in S^{3,1}$. First, we introduce the notation $\alpha^{(k)}:=x_1^{(k)}+x_2^{(k)}+x_3^{(k)}$ for any $k\geq0$.
		For item (i) and item (ii), consider an integer number $k\geq1$. Then, we observe that 
		\[\frac{1}{2}\leq\frac{\alpha^{(k-1)}+(2\eta-1)x_1^{(k-1)}}{2\alpha^{(k-1)}}\leq\frac{\alpha^{(k-1)}+(2\eta-1)\alpha^{(k-1)}}{2\alpha^{(k-1)}}\leq\eta,\]
		where the second term is actually $x_1^{(k)}+x_2^{(k)}$, what yields the claim.
		
		For item (ii), observe that $2\alpha^{(k)}\geq 2(x_1^{(k)}+x_2^{(k)})\geq1$ using item (i). Then, 
		\[x_1^{(k)}+x_3^{(k)}=\frac{\eta(x_1^{(k-1)}+x_3^{(k-1)})}{\alpha^{(k-1)}}\leq\eta\quad\text{and}\quad x_3^{(k+1)}=\frac{\eta x_3^{(k)}}{2\alpha^{(k)}} \leq \eta x_3^{(k)}.\]
		From the first one, we get that $x_3^{(k)} \leq x_1^{(k)}+x_3^{(k)}\leq\eta$. From the second one, as $\eta x_3^{(k)}<\eta$ and $\eta x_3^{(k)}<x_3^{(k)}$, we are done.

		For item (iii), we first claim that there exists a number $n_0\in\mathbb{N}$ such that $x_2^{(n_0)}+(1+\eta)x_3^{(n_0)}  \leq 2 \eta (x_2^{(n_0)} + x_3^{(n_0)})$. Indeed, by contrary, assume that $x_2^{(k)}+(1+\eta)x_3^{(k)}  > 2 \eta (x_2^{(k)} + x_3^{(k)})$ for all $k \in \mathbb{N}$. Equivalently, we have that  $x_3^{(k)}(1-\eta) > x_2^{(k)}(2 \eta -1)$ for all $k \in \mathbb{N}$. Consequently,
		\[
		(1- \eta ) \frac{\eta x_3^{(k-1)}}{2\alpha^{(k-1)}} > (2\eta -1)\frac{x_2^{(k-1)}+(1- \eta)x_3^{(k-1)}}{2\alpha^{(k-1)}}
		\]
		which is equivalent to $(1- \eta)^2 x_3^{(k-1)} > (2\eta -1) x_2^{(k-1)} $ for all $k \in \mathbb{N}$. In the same way, we get that $(1- \eta)(1-\eta-\eta^2) x_3^{(k-2)} > (2\eta -1) x_2^{(k-2)} $. Inductively, it is easy to check that 
		\begin{align}\label{eq:ineq}
			x_3^{(k-l)} (1- \eta) \left(1-\sum_{j=1}^l \eta ^j \right) > (2 \eta -1) x_2^{(k-l)}
		\end{align}
		for all $l \in \mathbb{N},l<k$. Hence, as $\frac{1}{2}<\eta<1$, the series $\sum_{j=1}^{\infty}\eta^j=\frac{\eta}{1-\eta}>1$. Therefore, there exists a number $m\in\mathbb{N}$ large enough such that $\sum_{j=1}^{m}\eta^j>1$. Then, taking a number $k\in\mathbb{N},k\geq m$, we obtain that $x_3^{(k-m)} (1- \eta) (1-\sum_{j=1}^{m} \eta ^j )<0$, which is a contradiction with the fact that $(2 \eta -1) x_2^{(k-m)}\geq0$ and \eqref{eq:ineq}.
        
		 Moreover, looking at the previous calculations, we have just proved that if $x_3^{(k)}(1-\eta)^2 \leq x_2^{(k)}(2 \eta -1)$ for certain $k \in \N$, then $x_3^{(k+1)}(1-\eta) \leq x_2^{(k+1)}(2 \eta -1)$. 
                But, taking into account that $\frac{1}{2}<\eta<1$ and using that there exists a number $n_0\in\mathbb{N}$ such that $x_3^{(n_0)}(1-\eta) \leq x_2^{(n_0)}(2 \eta -1)$, we get that $x_3^{(n_0)}(1-\eta)^2 < x_3^{(n_0)}(1-\eta) \leq x_2^{(n_0)}(2 \eta -1)$, which completes the proof.
		
		Lastly, for proving item (iv), we know that there exists a $n_0\in \N$ large enough such that $x_2^{(k)}+(1+\eta)x_3^{(k)}  \leq 2 \eta (x_2^{(k)} + x_3^{(k)})$ for all $k \geq n_0$ by item (iii). Now, we have that $\alpha^{(k)}\leq \eta $ for all $k \geq n_0+1$. Indeed,   
		\[
		\alpha^{(k)}= \frac{2 \eta x_1^{(k-1)} + (1+ \eta) x_3^{(k-1)} + x_2^{(k-1)}}{2\alpha^{(k-1)}} \leq \frac{2\eta(x_1^{(k-1)}+x_2^{(k-1)}+x_3^{(k-1)})}{2\alpha^{(k-1)}}= \eta
		\]
		for all $k \geq n_0+1$. Then, taking $m_0=n_0+1$ we obtain that $2x_1^{(k)} (\alpha^{(k)} - \eta) - \eta x_3^{(k)} \leq 0$ for all $k \geq m_0$, which implies that $2\alpha^{(k)}x_1^{(k)} \leq \eta(2 x_1^{(k)} + x_3^{(k)})$ for all $k \geq m_0$. Therefore, 
		\[
		x_1^{(k+1)}= \frac{\eta(2 x_1^{(k)} + x_3^{(k)})}{2\alpha^{(k)}} \geq x_1^{(k)} 
		\]
		for all $k \geq m_0$, and the result follows. 
	\end{proof}
        \begin{remark}\label{rem:limit_u}
           Let be the operator $\widetilde{V}$  defined in \eqref{op_wolbachia_norm} and suppose that $\lim_{k\to\infty}x_1^{(k)}$,  
            $\lim_{k\to\infty}x_2^{(k)}$ and $\lim_{k\to\infty}x_3^{(k)}$ exist for any initial point $s=(x_1^{(0)},x_2^{(0)},x_3^{(0)},u^{(0)}) \in S^{3,1}$. Then, observe that $\lim_{k\to\infty}u^{(k)} \neq 0$. Indeed, if suppose that $\lim_{k\to\infty}u^{(k)}=0$ then $\lim_{k\to\infty}x_1^{(k)}+x_2^{(k)}+x_3^{(k)}=1$ and necessarily $\lim_{k\to\infty}(1-\eta)(2x_1^{(k)}+x_3^{(k)})+x_2^{(k)}=0$. As $\eta\neq1$, it holds that $\lim_{k\to\infty}x_1^{(k)}=\lim_{k\to\infty}x_2^{(k)}=\lim_{k\to\infty}x_3^{(k)}=0$, a contradiction with the fact that $x_1^{(k)}+x_2^{(k)}+x_3^{(k)}+u^{(k)}=1$ for any $k\geq0$.
        \end{remark}
	\begin{theorem}
		Consider the operator $\widetilde{V}_\eta$ given by \eqref{op_wolbachia_norm} with $\frac{1}{2}<\eta<1$. Then, for an initial point $s=(x_1^{(0)},x_2^{(0)},x_3^{(0)},u^{(0)}) \in S^{3,1}$, it holds that
		\[
		\lim_{k \to \infty} \widetilde{V}_\eta^{k}(s)= \left\{
		\begin{array}{ll}
			(0,\frac{1}{2},0,\frac{1}{2}), & {\rm if} \ x_1^{(0)}=x_3^{(0)}=0, \smallskip \\
			(\eta,0,0, 1- \eta), & {\rm otherwise.}  
		\end{array}
		\right.
		\]
	\end{theorem}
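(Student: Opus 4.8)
The plan is to split the analysis according to whether the initial point lies in the face $\{x_1=x_3=0\}$ of $S^{3,1}$ or not, since only points on that face flow to $(0,\tfrac12,0,\tfrac12)$. \emph{The degenerate case.} If $x_1^{(0)}=x_3^{(0)}=0$, a single application of $\widetilde V_\eta$ (using either \eqref{op_wolbachia_norm} or \eqref{op_simp}, the $u$'s cancelling) gives at once $x_1^{(1)}=x_3^{(1)}=0$, $x_2^{(1)}=\tfrac12$ and $u^{(1)}=1-\tfrac12=\tfrac12$; that is, $\widetilde V_\eta(s)=(0,\tfrac12,0,\tfrac12)$, which is a fixed point. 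Hence the orbit is constant from the first step on, and its limit is $(0,\tfrac12,0,\tfrac12)$.

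\emph{The generic case.} Assume now $x_1^{(0)}>0$ or $x_3^{(0)}>0$, and write $\alpha^{(k)}:=x_1^{(k)}+x_2^{(k)}+x_3^{(k)}$. From Proposition~\ref{lem:desig} I would extract three facts. First, $x_3^{(k)}\to0$: the refinement $x_3^{(k+1)}=\eta x_3^{(k)}/(2\alpha^{(k)})\le\eta x_3^{(k)}$ obtained inside the proof of item~(ii) forces geometric decay. Second, $L:=\lim_k x_1^{(k)}$ exists, because $x_1^{(k)}$ is eventually nondecreasing by item~(iv) and bounded above by $\eta$ by item~(i). Third, $L>0$ here: since $u^{(0)}>0$ and $2x_1^{(0)}+x_3^{(0)}>0$ we get $x_1^{(1)}>0$, and positivity of $x_1$ propagates along the orbit (if $x_1^{(k)}>0$ then $x_1^{(k+1)}>0$, since $\alpha^{(k)}>0$), so $L\ge x_1^{(m_0)}>0$ for the index $m_0$ of item~(iv).

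The heart of the argument is to promote this to $x_1^{(k)}\to\eta$ and $x_2^{(k)},x_3^{(k)}\to0$. The device is the auxiliary quantity $\xi^{(k)}:=x_1^{(k)}+x_3^{(k)}$, which — as already observed in the proof of item~(ii) — satisfies the purely multiplicative recursion $\xi^{(k)}=\eta\,\xi^{(k-1)}/\alpha^{(k-1)}$. Since $\xi^{(k)}\to L>0$ and $\xi^{(1)}>0$, the telescoping product $\prod_{j=1}^{k-1}\eta/\alpha^{(j)}=\xi^{(k)}/\xi^{(1)}$ converges to the positive number $L/\xi^{(1)}$; hence its factors tend to $1$, i.e. $\alpha^{(k)}\to\eta$. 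Consequently $x_2^{(k)}=\alpha^{(k)}-x_1^{(k)}-x_3^{(k)}\to\eta-L$. Passing to the limit in the $x_2$-recursion $x_2^{(k+1)}=(x_2^{(k)}+(1-\eta)x_3^{(k)})/(2\alpha^{(k)})$ then yields $\eta-L=(\eta-L)/(2\eta)$, so $(\eta-L)(2\eta-1)=0$; as $\eta\ne\tfrac12$ this forces $L=\eta$. Therefore $x_1^{(k)}\to\eta$, $x_2^{(k)}\to0$, $x_3^{(k)}\to0$, and $u^{(k)}=1-x_1^{(k)}-x_2^{(k)}-x_3^{(k)}\to1-\eta$, as claimed.

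I expect the only real obstacle to be the implication ``$x_1^{(k)}$ converges $\Rightarrow x_2^{(k)}\to0$'': neither $x_2^{(k)}$ nor $\alpha^{(k)}$ is manifestly monotone, so a direct compactness argument on the orbit stalls, and Proposition~\ref{lem:desig} by itself does not pin down the limit of $x_1^{(k)}$. The resolution is the observation that $x_1+x_3$ rescales by the single common factor $\eta/\alpha^{(k)}$ at each iteration, which converts convergence of $x_1+x_3$ to a \emph{positive} limit into convergence of the normalising factor $\alpha^{(k)}$ to $\eta$; positivity of that limit is exactly what separates the generic case from the degenerate one, which is why the statement is a dichotomy. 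A minor point to check is that $\alpha^{(k)}>0$ for all $k\ge1$ (so that the operator and the recursion for $\xi^{(k)}$ are well defined), which is immediate from item~(i).
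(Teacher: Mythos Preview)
Your proof is correct and follows the paper's outline through Proposition~\ref{lem:desig}, but diverges from it at the step where convergence of $x_2^{(k)}$ is established and the limit identified. The paper inverts the $x_1$-recursion to write
\[
x_2^{(k)}=\frac{\eta(2x_1^{(k)}+x_3^{(k)})}{2x_1^{(k+1)}}-x_1^{(k)}-x_3^{(k)},
\]
so that convergence of $x_1^{(k)}$ and $x_3^{(k)}$ forces convergence of $x_2^{(k)}$ and $u^{(k)}$; it then appeals to Remark~\ref{rem:limit_u} and Proposition~\ref{prop:fix_points_wolbachia} to conclude that the limit is a fixed point, and rules out $(0,\tfrac12,0,\tfrac12)$ because $\lim x_1^{(k)}>0$. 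Your route via the multiplicative recursion $\xi^{(k)}=\eta\,\xi^{(k-1)}/\alpha^{(k-1)}$ for $\xi=x_1+x_3$ is a genuinely different device: the telescoping product gives $\alpha^{(k)}\to\eta$ directly, and then the $x_2$-recursion alone pins down $L=\eta$ without invoking the fixed-point classification or Remark~\ref{rem:limit_u}. Your argument is a bit more self-contained and explains \emph{why} the limit is $(\eta,0,0,1-\eta)$ rather than merely selecting it from a list; the paper's argument, in turn, is more mechanical and would transfer unchanged to any situation where one can show every coordinate converges.
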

	
	\begin{proof}
		Consider $s=(x_1^{(0)},x_2^{(0)},x_3^{(0)},u^{(0)}) \in S^{3,1}$. First, note that if $x_1^{(0)}=x_3^{(0)}=0$, then clearly $\widetilde{V}_\eta^{k}(s)=(0,\frac{1}{2},0,\frac{1}{2})$ for all $k\geq1$ and so $\lim_{k \to \infty} \widetilde{V}_\eta^{k}(s)=(0,\frac{1}{2},0,\frac{1}{2})$. 
				
		Otherwise, we can ensure $x_1^{(1)}\neq0$. By Lemma \ref{lem:desig} (iv), we have that $\{x_1^{(k)}\}_{k \in \N}$ is an increasing bounded positive sequence. Consequently,  its limit exists and is positive. Analogously, by Lemma \ref{lem:desig} (ii), we have that $\{x_3^{(k)}\}_{k\in \N}$ is a descending bounded positive sequence, so its limit exists as well. Since 
        \[x_2^{(k)}=\frac{\eta(2x_1^{(k)}+x_3^{(k)})}{2x_1^{(k+1)}}-x_1^{(k)}-x_3^{(k)}\quad\text{and}\quad u^{(k)}=1-(x_1^{(k)}+x_2^{(k)}+x_3^{(k)})\]
        for all $k\geq1$, we deduce that  $\{x_2^{(k)}\}_{k\in \N}$ and   $\{u^{(k)}\}_{k\in \N}$ are also two convergent sequences. 
        As a consequence of Remark \ref{rem:limit_u}, if we take limits on both sides of the expressions which define the dynamical system \eqref{op_wolbachia_norm}, we have that the possible limits are exactly its fixed points but, as  $\lim_{k \to \infty} x_1^{(k)}>0$, we conclude that $\lim_{k \to \infty} \widetilde{V}_\eta^{k}(s)=(\eta,0,0,1-\eta)$.
        
       	\end{proof}
	   
   {\textbf{Case 2: $\eta=\frac{1}{2}$.}

   In this case, we also have that $S^{3,1}$ is invariant under $\widetilde{V}_{\frac{1}{2}}$. We obtain the following result regarding its limit points.
	\begin{theorem}
    Consider the operator $\widetilde{V}_{\frac{1}{2}}$ defined by \eqref{op_wolbachia_norm}. Then,  for an initial point $s=(x_1^{(0)},x_2^{(0)},x_3^{(0)},u^{(0)}) \in S^{3,1}$, it holds that
		\[
		\lim_{k \to \infty} \widetilde{V}_\frac{1}{2}^k(s)= \left\{
		\begin{array}{ll}
			\widetilde{V}_\frac{1}{2}(s), & {\rm if} \ x_3^{(0)}=0, \medskip \\
			\left(\frac{x_1^{(1)}+x_3^{(1)}}{1+4x_3^{(1)}},\frac{1+2(x_3^{(1)}-x_1^{(1)})}{2(1+4x_3^{(1)})},0, \frac{1}{2}\right), & {\rm otherwise.}  
		\end{array}
		\right.
		\] 
	\end{theorem}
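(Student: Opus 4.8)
The plan is to linearise the dynamics by a couple of well-chosen substitutions and then pass to the limit. Write $s^{(k)}=(x_1^{(k)},x_2^{(k)},x_3^{(k)},u^{(k)})=\widetilde V_{1/2}^{\,k}(s)$ and $\alpha^{(k)}:=x_1^{(k)}+x_2^{(k)}+x_3^{(k)}$, so that $u^{(k)}=1-\alpha^{(k)}$; since $u^{(0)}>0$ and $S^{3,1}$ is invariant under $\widetilde V_{1/2}$, we may cancel the common factor $u$ in \eqref{op_wolbachia_norm}. The first step is to add the three equations for $x_1',x_2',x_3'$ with $\eta=\tfrac12$: this gives $\alpha^{(k+1)}=\tfrac12+x_3^{(k+1)}$ for every $k\geq0$, hence $\alpha^{(k)}=\tfrac12+x_3^{(k)}$ and $u^{(k)}=\tfrac12-x_3^{(k)}$ for all $k\geq1$. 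If $x_3^{(0)}=0$, then $x_3^{(k)}=0$ and $\alpha^{(k)}=\tfrac12$ for all $k\geq1$, so the equations for $x_1,x_2,u$ become the identity from $k=1$ on; thus $s^{(k)}=\widetilde V_{1/2}(s)$ for all $k\geq1$, which settles the first case.

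Assume next $x_3^{(0)}\neq0$, so $x_3^{(k)}>0$ for all $k\geq1$. Substituting $\alpha^{(k)}=\tfrac12+x_3^{(k)}$ into the equation for $x_3'$ produces the scalar recurrence $x_3^{(k+1)}=x_3^{(k)}/(2+4x_3^{(k)})$ for $k\geq1$; taking reciprocals $t_k:=1/x_3^{(k)}$ turns it into the affine recurrence $t_{k+1}=2t_k+4$, so $t_k=2^{k-1}(t_1+4)-4\to\infty$ and therefore $x_3^{(k)}\to0$. In particular $\alpha^{(k)}\to\tfrac12$, hence $u^{(k)}=1-\alpha^{(k)}\to\tfrac12$.

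For the remaining coordinates I would track $a_k:=x_1^{(k)}+x_3^{(k)}$: adding the equations for $x_1'$ and $x_3'$ and using $\alpha^{(k)}=\tfrac12+x_3^{(k)}$ gives $a_{k+1}=a_k/(1+2x_3^{(k)})$ for $k\geq1$, while the $x_3$-recurrence yields $1+2x_3^{(k)}=x_3^{(k)}/(2x_3^{(k+1)})$; combining these, $a_{k+1}/x_3^{(k+1)}=2\,a_k/x_3^{(k)}$, whence $a_k/x_3^{(k)}=2^{k-1}a_1/x_3^{(1)}$ for $k\geq1$. Plugging in the closed form $x_3^{(k)}=x_3^{(1)}\big/\big(2^{k-1}(1+4x_3^{(1)})-4x_3^{(1)}\big)$,
\[
a_k=\frac{2^{k-1}a_1}{2^{k-1}(1+4x_3^{(1)})-4x_3^{(1)}}\longrightarrow\frac{a_1}{1+4x_3^{(1)}}=\frac{x_1^{(1)}+x_3^{(1)}}{1+4x_3^{(1)}}\qquad(k\to\infty).
\]
Since $x_3^{(k)}\to0$, this is also $\lim_k x_1^{(k)}$, and from $x_2^{(k)}=\alpha^{(k)}-a_k$ we obtain $\lim_k x_2^{(k)}=\tfrac12-\frac{x_1^{(1)}+x_3^{(1)}}{1+4x_3^{(1)}}=\frac{1+2(x_3^{(1)}-x_1^{(1)})}{2(1+4x_3^{(1)})}$. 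Together with $\lim_k u^{(k)}=\tfrac12$ this is exactly the asserted limit.

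Everything reduces to elementary scalar recurrences, so I do not expect a serious obstacle; the only genuine insight is the pair of substitutions — first that the three ``female'' coordinates always sum to $\tfrac12+x_3^{(k)}$, and then that $1/x_3^{(k)}$ and $a_k/x_3^{(k)}$ evolve linearly. The point to be careful with is the index shift: the identity $\alpha^{(k)}=\tfrac12+x_3^{(k)}$ holds only for $k\geq1$, which is precisely why the statement is phrased through $s^{(1)}$ rather than $s^{(0)}$, and one should record that $1+4x_3^{(1)}>0$ and all denominators above are strictly positive, using the invariance of $S^{3,1}$ under $\widetilde V_{1/2}$.
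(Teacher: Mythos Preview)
Your proof is correct and follows the same strategy as the paper: both hinge on the identity $x_1^{(k)}+x_2^{(k)}=\tfrac12$ (equivalently $\alpha^{(k)}=\tfrac12+x_3^{(k)}$) for $k\geq1$, and then derive closed forms for $x_1^{(k)}$ and $x_3^{(k)}$ in terms of $x_1^{(1)},x_3^{(1)}$ before passing to the limit. The only difference is in the bookkeeping: the paper unwinds the recursions for $x_1^{(k)}$ and $x_3^{(k)}$ by direct back-substitution and recognises partial geometric sums, whereas you linearise via $t_k=1/x_3^{(k)}$ and the ratio $a_k/x_3^{(k)}$; your route is cleaner and makes the closed forms immediate, but the underlying argument is the same.
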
 
		
	\begin{proof}
		Let  $s=(x_1^{(0)},x_2^{(0)},x_3^{(0)},u^{(0)}) \in S^{3,1}$. If $x_3^{(0)}=0$, then it is easy to check that
		\[
		\widetilde{V}_\frac{1}{2}^k(s)=\widetilde{V}_\frac{1}{2}(s)=\left(\frac{x_1^{(0)}}{2(x_1^{(0)}+x_2^{(0)})},\frac{x_2^{(0)}}{2(x_1^{(0)}+x_2^{(0)})},0,\frac{1}{2}\right)
		\]
		for all $k \geq 1$. Consequently, $\lim_{k \to \infty} \widetilde{V}_\frac{1}{2}^{k}(s)=\widetilde{V}_\frac{1}{2}(s)$. 
		
		Next, assume that $x_3^{(0)}\neq0$. Then, as $x_1^{(k)}+x_2^{(k)}=\frac{1}{2}$ for any $k\geq1$, we have that 
		\[
		x_1^{(k)}=\frac{ x_1^{(k-1)}+ \frac{1}{2}x_3^{(k-1)}}{1+2x_3^{(k-1)}}=\frac{ x_1^{(k-2)}+ (\frac{1}{2}+\frac{1}{4})x_3^{(k-2)}}{1+(2+1)x_3^{(k-2)}}=\dots=\frac{ x_1^{(1)}+x_3^{(1)} \sum_{j=1}^{k-1}\frac{1}{2^j}}{1+x_3^{(1)}\sum_{j=-1}^{k-3}\frac{1}{2^j}}
 \]
		for all $k\geq2$. Hence,
		\[
		\lim_{k \to \infty}x_1^{(k)}=\frac{ x_1^{(1)}+x_3^{(1)} \sum_{k=1}^{\infty}\frac{1}{2^k}}{1+x_3^{(1)}\sum_{k=-1}^{\infty}\frac{1}{2^k}}=\frac{x_1^{(1)}+x_3^{(1)}}{1+4x_3^{(1)}}\quad\text{and}\quad\lim_{k \to \infty}x_2^{(k)}=\frac{1+2(x_3^{(1)}-x_1^{(1)})}{2(1+4x_3^{(1)})}.
		\] 
		Analogously, we have that
		\[
		x_3^{(k)}=\frac{\frac{1}{2}x_3^{(k-1)}}{1+2x_3^{(k-1)}}=\frac{\frac{1}{4}x_3^{(k-2)}}{1+(2+1)x_3^{(k-2)}}=\dots=\frac{\frac{1}{2^{k-1}}x_3^{(1)}}{1+x_3^{(1)}\sum_{j=-1}^{k-3}\frac{1}{2^j}}
		\]
		for all $k\geq2$ and, so $\lim_{k\to\infty}x_3^{(k)}=0$. Finally, $\lim_{k\to\infty}u^{(k)}=\frac{1}{2}$, and the result follows.
	\end{proof}
        
        \vspace{0.3cm}
        \textbf{Case 3: $\eta=1$.}
        
        First, notice that, unlike the previous cases, $\widetilde{V}_1$ does not necessarily map $S^{3,1}$ to itself. In fact, we have the following result.
        \begin{lemma}
            The set $\{(x_1,x_2,x_3,u)\in S^{3,1} \colon x_2>0\}$ is the largest invariant subset of $S^{3,1}$ with respect to $\widetilde{V}_1$.
        \end{lemma}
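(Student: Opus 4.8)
The plan is to argue directly from the explicit form of $\widetilde{V}_1$. Setting $\eta=1$ in \eqref{op_wolbachia_norm} and writing $\alpha:=x_1+x_2+x_3$, the operator reads
\[
x_1'=\frac{2x_1+x_3}{2\alpha},\qquad x_2'=\frac{x_2}{2\alpha},\qquad x_3'=\frac{x_3}{2\alpha},\qquad u'=\frac{x_2}{2\alpha},
\]
so the crucial structural feature is that $u'=x_2'=\dfrac{x_2}{2\alpha}$. On $S^{3,1}$ we have $\alpha>0$, so the four output coordinates are well defined, non-negative, and satisfy $x_1'+x_2'+x_3'+u'=1$; therefore $\widetilde{V}_1(s)\in S^{3,1}$ if and only if its last coordinate $u'=x_2/(2\alpha)$ is strictly positive, that is, if and only if $x_2>0$ (and in that case $x_1'+x_2'+x_3'\ge x_2'>0$ holds automatically). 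This single equivalence does all the work.

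Next I would check that $T:=\{(x_1,x_2,x_3,u)\in S^{3,1}\colon x_2>0\}$ is invariant under $\widetilde{V}_1$. If $s=(x_1,x_2,x_3,u)\in T$, then $x_2>0$, so by the equivalence above $\widetilde{V}_1(s)\in S^{3,1}$; moreover its second coordinate is $x_2'=x_2/(2\alpha)>0$, whence $\widetilde{V}_1(s)\in T$. Thus $\widetilde{V}_1(T)\subseteq T$.

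Then I would prove maximality. Let $U\subseteq S^{3,1}$ be any subset with $\widetilde{V}_1(U)\subseteq U$, and suppose towards a contradiction that $U$ contains a point $s=(x_1,0,x_3,u)$ with $x_2=0$. Since $s\in S^{3,1}$, we have $\alpha=x_1+x_3>0$, so $\widetilde{V}_1(s)$ is defined, but its last coordinate equals $0/(2\alpha)=0$; by the equivalence above $\widetilde{V}_1(s)\notin S^{3,1}$, hence $\widetilde{V}_1(s)\notin U$, contradicting $\widetilde{V}_1(U)\subseteq U$. So every invariant $U\subseteq S^{3,1}$ is contained in $T$; together with the previous paragraph this shows $T$ is the largest invariant subset of $S^{3,1}$ with respect to $\widetilde{V}_1$.

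I do not expect a genuine obstacle here: the only point requiring care is the distinction between $\widetilde{V}_1$ being \emph{defined} at a point of $S^{3,1}$ (which only needs $\alpha>0$) and $\widetilde{V}_1$ \emph{mapping} that point back into $S^{3,1}$ (which additionally needs $u'>0$, equivalently $x_2>0$). Once this is made precise, both the inclusion $\widetilde{V}_1(T)\subseteq T$ and the maximality follow immediately from the identity $u'=x_2/(2\alpha)$.
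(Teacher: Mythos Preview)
Your proof is correct and follows essentially the same approach as the paper: both arguments hinge on the identity $u'=x_2'=x_2/(2\alpha)$ at $\eta=1$, from which invariance of $T$ and the failure of any point with $x_2=0$ to remain in $S^{3,1}$ are immediate. Your version is simply more explicit—writing out the operator, checking both defining conditions of $S^{3,1}$ for the image, and phrasing maximality via an arbitrary invariant subset $U$—whereas the paper compresses this into two sentences.
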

        \begin{proof}
            Clearly, if we take an initial point $s=(x_1^{(0)},x_2^{(0)},x_3^{(0)},u^{(0)})\in S^{3,1}$ such that $x_2^{(0)}>0$, then $x_2^{(k)}=u^{(k)}>0$ for any $k\in\mathbb{N}$.
            To prove that it is the largest invariant contained in $S^{3,1}$, consider an initial point $s\in S^{3,1}$ with $x_2^{(0)}=0$. So, $u^{(1)}=0$  and thus $s^{(1)}\notin S^{3,1}$.
        \end{proof}
                
	\begin{theorem}
	    Consider the operator $\widetilde{V}_1$ defined by \eqref{op_wolbachia_norm}. Then, for an initial point $s=(x_1^{(0)},x_2^{(0)},x_3^{(0)},u^{(0)}) \in S^{3,1}$ with $x_2^{(0)}>0$, it holds that
        \[ 		
        \lim_{k \to \infty} \widetilde{V}_1^k(s)= \left\{ 		\begin{array}{ll} 			(0,\frac{1}{2},0,\frac{1}{2}), & {\rm if} \ x_1^{(0)}=x_3^{(0)}=0, \medskip \\ 			(1,0,0,0), & {\rm otherwise.}   		\end{array} 		\right. 
        \]
	\end{theorem}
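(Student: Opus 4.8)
The plan is to exploit the strong simplification that occurs at $\eta=1$. Substituting $\eta=1$ into \eqref{op_wolbachia_norm} one sees that $x_2'=u'$, so $x_2^{(k)}=u^{(k)}$ for every $k\geq 1$; writing $\alpha^{(k)}:=x_1^{(k)}+x_2^{(k)}+x_3^{(k)}$, the normalisation $\alpha^{(k)}+u^{(k)}=1$ then gives $\alpha^{(k)}=1-x_2^{(k)}$ for $k\geq 1$. The case $x_1^{(0)}=x_3^{(0)}=0$ is disposed of immediately: a direct computation gives $\widetilde{V}_1(s)=(0,\frac{1}{2},0,\frac{1}{2})$, which is a fixed point of $\widetilde{V}_1$, so the trajectory is eventually constant and its limit is $(0,\frac{1}{2},0,\frac{1}{2})$.

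For the remaining case, note first that $x_1^{(0)}\neq 0$ or $x_3^{(0)}\neq 0$ forces $2x_1^{(0)}+x_3^{(0)}>0$, hence $x_1^{(k)}>0$ for all $k\geq 1$, while $x_2^{(k)}>0$ for all $k$ because $x_2^{(0)}>0$ (which, by the preceding lemma, is exactly what keeps the orbit in $S^{3,1}$). The key observation is that when $\eta=1$ the coordinates $x_2$ and $x_3$ are rescaled by the same factor $1/(2\alpha^{(k)})$ at each step, so the ratio $x_3^{(k)}/x_2^{(k)}$ is constant; call its value $c:=x_3^{(0)}/x_2^{(0)}\geq 0$, so that $x_3^{(k)}=c\,x_2^{(k)}$ for every $k$. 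Together with $\alpha^{(k)}=1-x_2^{(k)}$ this yields $x_1^{(k)}=1-(2+c)x_2^{(k)}$, so the entire orbit is determined by the scalar sequence $a_k:=x_2^{(k)}$, which obeys the Möbius-type recursion $a_{k+1}=a_k/(2(1-a_k))$.

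It then remains to analyse this one-dimensional recursion. One checks that $a_1=x_2^{(1)}=x_2^{(0)}/(2\alpha^{(0)})\leq\frac{1}{2}$, with strict inequality precisely when $x_1^{(0)}+x_3^{(0)}>0$, which is the case at hand. Since $0<a_k<\frac{1}{2}$ implies both $0<a_{k+1}<a_k$ and $a_{k+1}<\frac{1}{2}$, the sequence $(a_k)_{k\geq 1}$ is strictly decreasing and bounded below, hence convergent to some $L\in[0,\frac{1}{2})$; passing to the limit in the recursion forces $L(1-2L)=0$, so $L=0$. Therefore $x_2^{(k)}\to 0$, and consequently $u^{(k)}=x_2^{(k)}\to 0$, $x_3^{(k)}=c\,x_2^{(k)}\to 0$ and $x_1^{(k)}=1-(2+c)x_2^{(k)}\to 1$, which gives $\lim_{k\to\infty}\widetilde{V}_1^k(s)=(1,0,0,0)$, as claimed.

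As for the main difficulty: there is no real obstacle once one spots that the ratio $x_3/x_2$ is conserved, since this collapses a genuinely three-dimensional non-linear system to a transparent scalar recursion. The only steps demanding a little care are verifying that $a_1<\frac{1}{2}$ strictly in the non-trivial case — this is what lets the monotone–bounded argument run and rules out the spurious equilibrium $\frac{1}{2}$ — and invoking the preceding lemma to ensure that the hypothesis $x_2^{(0)}>0$ indeed keeps the whole trajectory inside $S^{3,1}$.
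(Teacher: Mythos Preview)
Your proof is correct but takes a somewhat different route from the paper's. The paper also notes that $x_2^{(k)}=u^{(k)}$ for $k\geq 1$, but instead of your conserved-ratio reduction to the scalar recursion $a_{k+1}=a_k/(2(1-a_k))$ followed by a monotone--bounded argument, it telescopes directly to the explicit closed form
\[
x_i^{(k)}=\frac{x_i^{(1)}}{2^{k-1}\bigl(x_1^{(1)}+x_3^{(1)}\bigr)+2x_2^{(1)}},\qquad i=2,3,\ k\geq 2,
\]
from which $x_2^{(k)},x_3^{(k)}\to 0$ is read off immediately once $x_1^{(1)}\neq 0$ (equivalently $x_1^{(0)}+x_3^{(0)}>0$). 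Your approach is a bit more structural --- the invariance of $x_3/x_2$ and the one-dimensional monotone analysis would adapt readily to perturbations of the map --- whereas the paper's explicit formula additionally reveals the exact geometric rate of decay of $x_2^{(k)}$ and $x_3^{(k)}$.
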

	\begin{proof}
        Take an arbitrary initial point $s=(x_1^{(0)},x_2^{(0)},x_3^{(0)},u^{(0)}) \in S^{3,1}$ with $x_2^{(0)}>0$. Firstly, notice that if $x_1^{(0)}=x_3^{(0)}=0$, then clearly $\widetilde{V}_1^k(s)=(0,\frac{1}{2},0,\frac{1}{2})$ for any $k\geq1$, and so $\lim_{k \to \infty}\widetilde{V}_1^k(s)=(0,\frac{1}{2},0,\frac{1}{2})$.
        
        Otherwise, we can ensure that $x_1^{(1)}\neq0$. Moreover,  we have that
        \[
        x_i^{(k)}=\frac{x_i^{(k-1)}}{2(x_1^{(k-1)}+x_2^{(k-1)}+x_3^{(k-1)})}=\dots=\frac{x_i^{(1)}}{2^{k-1}(x_1^{(1)}+x_3^{(1)})+2x_2^{(1)}},
        \]
        for $i=2,3$ and for all $k\geq2$. This implies that $\lim_{k\to\infty}x_2^{(k)}=\lim_{k\to\infty}x_3^{(k)}=0$. In addition, since $x_2^{(k)}=u^{(k)}$ for all $k\geq1$, we also have that $\lim_{k\to\infty}u^{(k)}=0$. Thus, $\lim_{k \to \infty}\widetilde{V}_1^k(s)=(1,0,0,0)$.
        
        	\end{proof}

	\begin{interp}
	    	For  \textit{Armadillium vulgare} population, we can conclude that for any initial state $s\in S^{3,1}$ (the probability distribution on the set of possible genotypes $\{\text{ZZ+w},\text{ZW},\text{ZW+w},\text{ZZ}\}$), the future of the population is always stable. If the transmission rate is $\frac{1}{2}<\eta<1$, then if there is no ZZ+w and ZW+w in the initial state, the population tends to the equilibrium state $(0,\frac{1}{2},0,\frac{1}{2})$, where ZW and ZZ are distributed equally. Otherwise, the population tends to the equilibrium state $(\eta,0,0,1-\eta)$. Notice that, in both cases, the individuals ZW+w tend to extinction. In the case of $\eta=\frac{1}{2}$, although the limit always exists, it depends on the initial point. Moreover, again, the ZW+w individuals always tend to extinction.
        Finally,  for $\eta=1$ and if the probability of genotypes ZZ+w and ZW+w is zero, then the population tends to the equilibrium state $(0,\frac{1}{2},0,\frac{1}{2})$ where ZW and ZZ are distributed equally. Otherwise, the population tends to $(1,0,0,0)$, meaning that the proportion of males gradually decreases and tends to zero. As a result, the population will eventually face extinction due to the insignificant proportion of males.
        	\end{interp}

		\section{XY systems with fertile XY females}\label{lemming}
	
		\subsection{Generalising the wood lemming sex-determination system}

	This subsection aims to develop the dynamic aspects of a (normalised) gonosomal operator that generalises the modelling of some populations with atypical fertile XY females, such as some rodents. Consider a gonosomal algebra $\mathcal{A}$ with gonosomal basis $(f_i)_{i \in \Lambda}\cup h$ and scalars $0\leq\gamma_i\leq\frac{1}{2}$, some non-zero, such that the product is given by  
	\begin{align}\label{al_gen_musminutoides}
		f_ih=\gamma_{i}f_1+\sum_{j=2}^{n}\frac{1-2\gamma_i}{n-1}f_j+\gamma_ih,
	\end{align}
	for any $i\in\Lambda$. Its associated gonosomal operator is
	\begin{equation}\label{gen_musminutoides}
		V_{\gamma_1,\dots,\gamma_n}\colon\left\{
		\begin{array}{lcccl}
			x_1^\prime & = &u^\prime & = & u\sum_{i \in \Lambda}\gamma_i x_i,\\
			x_j' &   & & = & u\sum_{i \in \Lambda}\frac{1-2\gamma_i}{n-1}x_i, \text{ for any }j=2,\dots,n;
		\end{array}\right.
	\end{equation}
	We first have to compute the fixed points of the gonosomal operator given by \eqref{gen_musminutoides} depending on the parameters $\{\gamma_i\}_{i \in \Lambda}$. For that, as $x_1^{(k)}=u^{(k)}$ and $x_2^{(k)}=\dots=x_n^{(k)}$ for any $k\geq1$, we can equivalently compute the fixed points on the simplified two-dimensional discrete-time dynamical system given by 
	\begin{equation}\label{gen_musminutoides_simp}
		SV_{n,\gamma,\lambda}: \left\{
		\begin{array}{R{1.3em}l}
			x'=&x\left(\gamma x+\lambda y\right),\\
			y'=&x\left(\frac{1-2\gamma}{n-1}x+\big(1-\frac{2\lambda}{n-1}\big)y\right);
		\end{array}\right.
	\end{equation}
	where $\gamma=\gamma_1$, $\lambda=\sum_{i=2}^n\gamma_i$ and $(\gamma,\lambda)\neq(0,0)$.
	Clearly, there is a one-to-one correspondence between the fixed points of \eqref{gen_musminutoides_simp} and the fixed points of \eqref{gen_musminutoides}. Moreover, this correspondence also holds if we restrict to non-zero, non-negative fixed points.
   	\begin{proposition}\label{prop:fix_points_musminutoides}
		Consider the discrete-time dynamical system $SV_{n,\gamma,\lambda}$ defined by \eqref{gen_musminutoides_simp}, where $n\in\mathbb{N},n>1$, $0\leq\gamma\leq\frac{1}{2}$, $0\leq\lambda\leq\frac{n-1}{2}$ and $(\gamma,\lambda)\neq(0,0)$. Then, $SV_{n,\gamma,\lambda}$ has the following non-zero fixed points:
		\begin{enumerate}[\rm (i)] 
			\item if $\lambda=0$ and $\gamma\neq0$, then the only  fixed point is $\left(\frac{1}{\gamma},\frac{1-2\gamma}{\gamma(\gamma-1)(n-1)}\right)$; and\smallskip
			\item if $\lambda\neq0$, then we distinguish two  cases:\smallskip
			\begin{enumerate}[\rm (a)]           
				\item  if $\lambda=(n-1)\gamma$, then the only  fixed point is $\left(\frac{1}{1-\gamma},\frac{1-2\gamma}{\gamma(1-\gamma)(n-1)}\right)$; and\smallskip
				\item \label{apartadob}if $\lambda\neq(n-1)\gamma$, then there exist at most two  fixed points. In fact, $\left(x,\frac{1-\gamma x}{\lambda}\right)$ is a fixed point if and only if $x$ is a solution of the quadratic equation 
				\begin{align}\label{eq_grad2}
					\left(\lambda-(n-1)\gamma\right)x^2+\left((n-1)(\gamma+1)-2\lambda\right)x-n+1=0.
				\end{align}			
			\end{enumerate}
		\end{enumerate}
	\end{proposition}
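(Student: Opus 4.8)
The plan is to solve the fixed-point equations of \eqref{gen_musminutoides_simp} directly. A point $(x,y)$ is a fixed point precisely when
\[
x=x(\gamma x+\lambda y),\qquad y=x\left(\tfrac{1-2\gamma}{n-1}x+\bigl(1-\tfrac{2\lambda}{n-1}\bigr)y\right).
\]
First I would rewrite the first equation as $x(1-\gamma x-\lambda y)=0$: if $x=0$, then the second equation forces $y=0$, so every \emph{non-zero} fixed point has $x\neq0$ and hence lies on the line $\gamma x+\lambda y=1$. Setting $c:=\tfrac{1-2\gamma}{n-1}$ and $d:=1-\tfrac{2\lambda}{n-1}$, the second fixed-point equation becomes $y(1-dx)=cx^2$, so the problem reduces to intersecting this curve with the line $\gamma x+\lambda y=1$.

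Next comes the split on $\lambda$. If $\lambda=0$, then $(\gamma,\lambda)\neq(0,0)$ forces $\gamma\neq0$, the line gives $x=\tfrac1\gamma$, and substituting into $y(1-x)=cx^2$ together with $\gamma\neq1$ (automatic since $0\le\gamma\le\tfrac12$) produces the unique value of $y$ in item (i); this disposes of (i). If $\lambda\neq0$, I would solve the line for $y=\tfrac{1-\gamma x}{\lambda}$, substitute into $y(1-dx)=cx^2$, and clear the denominator to reach $(\gamma d-c\lambda)x^2-(\gamma+d)x+1=0$. The one step needing care is the simplification of the leading coefficient, namely the identity $\gamma d-c\lambda=\tfrac{(n-1)\gamma-\lambda}{n-1}$; after multiplying through by $-(n-1)$, this relation becomes exactly \eqref{eq_grad2}.

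Finally I would read off the two subcases of (ii). If $\lambda=(n-1)\gamma$ — which forces $\gamma\neq0$ since $\lambda\neq0$ and $n>1$ — the leading coefficient of \eqref{eq_grad2} vanishes, the equation is linear and reduces to $(1-\gamma)x=1$, so $x=\tfrac1{1-\gamma}$, and $y=\tfrac{1-\gamma x}{\lambda}$ gives the point in (ii)(a). If $\lambda\neq(n-1)\gamma$, then \eqref{eq_grad2} is a genuine quadratic, so there are at most two roots and hence at most two fixed points; conversely, any root $x$ of \eqref{eq_grad2} is automatically non-zero (substituting $x=0$ would give $-n+1=0$, impossible as $n>1$), and the pair $\bigl(x,\tfrac{1-\gamma x}{\lambda}\bigr)$ satisfies both fixed-point equations — the first because $\gamma x+\lambda y=1$ by construction, the second because every step of the substitution above is reversible — which yields the stated equivalence in (ii)(b). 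The whole argument is elementary; the only real care needed is the algebraic bookkeeping of the coefficient $\gamma d-c\lambda$ and checking that the parameter restrictions ($\gamma\neq1$ always, and $\gamma\neq0$ in case (ii)(a)) make every displayed fixed point well-defined, so there is no substantive obstacle.
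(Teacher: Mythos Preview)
Your proof is correct and follows essentially the same approach as the paper: both solve the fixed-point system directly by first observing that $x=0$ forces $y=0$, then using $\gamma x+\lambda y=1$ and substituting into the second equation, with the same case split on $\lambda$. The only cosmetic differences are your auxiliary notation $c,d$ and your extra remarks (that $x=0$ cannot be a root of \eqref{eq_grad2} and that the substitution is reversible), which the paper omits; conversely, the paper also notes that the discriminant of \eqref{eq_grad2} is non-negative, but this is not required for the proposition as stated.
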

	\begin{proof}
		We need to solve the system of equations defined by
		\begin{equation*}
			\left\{
			\begin{array}{rcl}
				x&=&x\left(\gamma x+\lambda y\right),\smallskip\\
				y&=&x\left(\frac{1-2\gamma}{n-1}x+\big(1-\frac{2\lambda}{n-1}\big)y\right).
			\end{array}\right.
		\end{equation*}
		First, if $x=0$, then $y=0$. For $x\neq 0$, it holds that $\gamma x+\lambda y=1$. If $\lambda=0$, then $\gamma\neq0$ and $x=\frac{1}{\gamma}$. So, from the second equation, we get that $y=\frac{1-2\gamma}{\gamma(\gamma-1)(n-1)}$. 
		If $\lambda\neq0$, then $y=\frac{1-\gamma x}{\lambda}$ and replacing this expression in the second equation, we obtain precisely the equation~\eqref{eq_grad2}.
		Now, we have to consider two cases. For $\lambda=(n-1)\gamma$ we have that $x=\frac{1}{1-\gamma}$ and $y=\frac{1-2\gamma}{\gamma(1-\gamma)(n-1)}$. 
		In the case of $\lambda\neq(n-1)\gamma$, observe that the discriminant of the equation \eqref{eq_grad2} is
		$\left((n-1)\gamma-2\lambda\right)^2+(n-1)^2(1-2\gamma)\geq0$, and so,
		both solutions are also real, which yields the claim.
	\end{proof}
	Next, we study sufficient and necessary conditions for the non-zero fixed points to be non-negative.
   
	We will denote by $\Delta:=\left((n-1)\gamma-2\lambda\right)^2+(n-1)^2(1-2\gamma)$ the radicand of \eqref{eq_grad2}.
	\begin{proposition}
		Consider the discrete-time dynamical system $SV_{n,\gamma,\lambda}$ defined by \eqref{gen_musminutoides_simp}, where $n\in\mathbb{N},n>1$, $0\leq\gamma\leq\frac{1}{2}$, $0\leq\lambda\leq\frac{n-1}{2}$ and $(\gamma,\lambda)\neq(0,0)$. The non-zero fixed points of $SV_{n,\gamma,\lambda}$ satisfy the following assertions:
		\begin{enumerate}[\rm (i)]
			\item  if $\lambda=0$, then the only fixed point is non-negative if and only if $\gamma=\frac{1}{2}$;
			\item  if $\lambda\neq0$ and $\lambda=(n-1)\gamma$, then the only  fixed point is always non-negative;
			\item if $\lambda\neq0$, then we state the following:
			\begin{enumerate}[\rm (a)]
				\item   if $\lambda>(n-1)\gamma$, then we only have one non-negative fixed point; and
				\item if $\lambda<(n-1)\gamma$, then  every fixed point is non-negative if and only if $\gamma=\frac{1}{2}$. Otherwise, at least one non-negative fixed point exists.
			\end{enumerate}
		\end{enumerate}
	\end{proposition}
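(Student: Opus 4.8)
The plan is to feed the explicit descriptions of the non-zero fixed points supplied by Proposition~\ref{prop:fix_points_musminutoides} into a sign analysis of their two coordinates. In each case the first coordinate turns out to be manifestly positive, so the whole problem reduces to controlling the sign of the second coordinate, equivalently (since $\lambda\geq 0$) the sign of $1-\gamma x$ when the first coordinate is $x$.

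For item~(i) I would substitute the unique fixed point $\left(\frac{1}{\gamma},\frac{1-2\gamma}{\gamma(\gamma-1)(n-1)}\right)$; here $\gamma\neq 0$ because $(\gamma,\lambda)\neq(0,0)$, the first coordinate is positive, the denominator $\gamma(\gamma-1)(n-1)$ of the second coordinate is negative since $0<\gamma\leq\frac{1}{2}<1$, and the numerator $1-2\gamma\geq 0$ vanishes exactly when $\gamma=\frac{1}{2}$; hence the second coordinate is non-negative if and only if $\gamma=\frac{1}{2}$. Item~(ii) is the same computation for $\left(\frac{1}{1-\gamma},\frac{1-2\gamma}{\gamma(1-\gamma)(n-1)}\right)$: now $\lambda=(n-1)\gamma\neq 0$ forces $\gamma>0$, so $\gamma(1-\gamma)(n-1)>0$ and the second coordinate is automatically non-negative. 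Both cases are short sign checks.

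For item~(iii) I would work with the quadratic $P(x):=\left(\lambda-(n-1)\gamma\right)x^2+\left((n-1)(\gamma+1)-2\lambda\right)x-(n-1)$ appearing in \eqref{eq_grad2}, whose roots produce the fixed points $\left(x,\frac{1-\gamma x}{\lambda}\right)$; such a point is non-negative precisely when $x\geq 0$ and $\gamma x\leq 1$, the latter condition being vacuous if $\gamma=0$ (in which case the second coordinate $1/\lambda$ is positive and one argues directly). The two facts that organise the argument are $P(0)=-(n-1)<0$ and the identity $\gamma^2 P(1/\gamma)=\lambda(1-2\gamma)\geq 0$ valid for $\gamma>0$, together with the fact, already observed in the proof of Proposition~\ref{prop:fix_points_musminutoides}, that $\Delta\geq 0$ so the roots are real. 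In subcase~(a) the leading coefficient $a=\lambda-(n-1)\gamma$ is positive, so by Vieta the product of the roots, $-(n-1)/a$, is negative: exactly one root is positive, and since $P$ opens upward with $P(0)<0\leq P(1/\gamma)$, that positive root lies in $(0,1/\gamma]$ and yields a non-negative fixed point, while the negative root does not; hence exactly one non-negative fixed point. In subcase~(b) we have $a<0$, which forces $\gamma>0$; one first checks that both roots are positive, because their product $-(n-1)/a$ is positive while their sum $-b/a$ is positive (here $b=(n-1)(\gamma+1)-2\lambda\geq(n-1)\gamma>0$ using $2\lambda\leq n-1$, and $a<0$). It then remains to decide which of the two positive roots satisfy $\gamma x\leq 1$, that is, lie in $[0,1/\gamma]$, and this is precisely where the value $P(1/\gamma)=\lambda(1-2\gamma)/\gamma^2$ enters: for $\gamma<\frac{1}{2}$ one has $P(1/\gamma)>0$, so $1/\gamma$ is strictly between the two roots, the larger root gives a fixed point with negative second coordinate, and only the smaller one gives a non-negative fixed point; for $\gamma=\frac{1}{2}$ one has $P(1/\gamma)=0$, so $1/\gamma=2$ is itself a root.

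The main obstacle is this borderline case $\gamma=\frac{1}{2}$ in item~(iii)(b): turning the equality $\gamma=\frac{1}{2}$ into a statement about the location of \emph{both} roots of $P$ relative to $1/\gamma$. Here I would use $P(1/\gamma)=0$ to identify $1/\gamma$ as one root and Vieta to write the remaining root explicitly in terms of $n$ and $\lambda$, and then compare it with $1/\gamma$ with the help of the constraint $\lambda\leq\frac{n-1}{2}$; this comparison is the step demanding the most care. The "only if" direction of (iii)(b), as well as the "otherwise, at least one non-negative fixed point exists" clause, is the easy half, already extracted above from the strict inequality $P(1/\gamma)>0$ for $\gamma<\frac{1}{2}$, which pushes the larger root strictly past $1/\gamma$ (producing a fixed point with negative second coordinate) while keeping the smaller root in $(0,1/\gamma)$. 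All remaining steps are routine sign bookkeeping under the standing constraints $0\leq\gamma\leq\frac{1}{2}$ and $0<\lambda\leq\frac{n-1}{2}$.
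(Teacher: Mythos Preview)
Your argument is correct and takes a genuinely different route from the paper. The paper writes the two roots of \eqref{eq_grad2} explicitly via the quadratic formula and the radicand $\Delta$, and then decides the signs of $x_i$ and of $1-\gamma x_i$ by pushing inequalities of the shape $\sqrt{\Delta}\lessgtr(\text{expression})$ through a squaring step. You bypass the explicit roots entirely: Vieta's relations together with $P(0)=-(n-1)<0$ determine the sign pattern of the roots from the sign of the leading coefficient, while the identity $\gamma^{2}P(1/\gamma)=\lambda(1-2\gamma)$ locates $1/\gamma$ relative to the roots and hence controls the sign of the second coordinate. This is shorter, avoids all square-root algebra, and makes the special role of the threshold $\gamma=\tfrac{1}{2}$ transparent as the vanishing of $P(1/\gamma)$.

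One caution about the step you flag as ``demanding the most care'', the case $\gamma=\tfrac{1}{2}$ in (iii)(b). Carrying out your plan, $P(2)=0$ and Vieta give the second root as $\dfrac{n-1}{\,n-1-2\lambda\,}$, and this is $\le 2$ if and only if $\lambda\le\tfrac{n-1}{4}$; the hypothesis $\lambda\le\tfrac{n-1}{2}$ alone does not force it. The paper's proof runs into the same obstruction in disguise: in its chain $x_1\le\tfrac{1}{\gamma}\Longleftrightarrow\sqrt{\Delta}\le\tfrac{1-\gamma}{\gamma}\bigl(\gamma(n-1)-2\lambda\bigr)\Longleftrightarrow\cdots$ the squaring step is legitimate only when the right-hand side is non-negative, i.e.\ when $2\lambda\le(n-1)\gamma$, which at $\gamma=\tfrac{1}{2}$ is again $\lambda\le\tfrac{n-1}{4}$. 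So your approach loses nothing relative to the paper here; it merely exposes the same boundary more cleanly.
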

	\begin{proof}
		Items (i) and (ii) are consequences of Proposition \ref{prop:fix_points_musminutoides}. To prove item (iii), denote by 	\[x_1=\frac{2\lambda-(n-1)(\gamma+1)-\sqrt{\Delta}}{2\big(\lambda-(n-1)\gamma\big)}\quad\text{and}\quad x_2=\frac{2\lambda-(n-1)(\gamma+1)+\sqrt{\Delta}}{2\big(\lambda-(n-1)\gamma\big)}\] both solutions of \eqref{eq_grad2}. Assume that $\lambda>(n-1)\gamma$. Since $2\lambda \leq n-1$ and $(n-1)(1+\gamma)+\sqrt{\Delta}>n-1$ then $x_1<0$. Hence,   $(x_1,\frac{1-\gamma x_1}{\lambda})$ is not a non-negative fixed point. Now, we prove that  $x_2>0$. We have that:
		\begin{align}\label{eq_x2_pos}
			\begin{split}
				x_2>0&\Longleftrightarrow\sqrt{\Delta}>(n-1)(\gamma+1)-2\lambda\\
				&\Longleftrightarrow (n-1)^2\gamma^2+4\lambda^2-4\lambda\gamma(n-1)+(n-1)^2(1-2\gamma)\\
				&\phantom{\Longleftrightarrow}>(n-1)^2(\gamma+1)^2+4\lambda^2-4\lambda(\gamma+1)(n-1)\\
				&\Longleftrightarrow-4(n-1)^2\gamma>-4(n-1)\lambda\Longleftrightarrow(n-1)\gamma<\lambda,
			\end{split}
		\end{align}
		but, this is  true by hypothesis. Moreover, if $\gamma\neq0$,  then $x_2\leq \frac{1}{\gamma}$.  Indeed,
		\begin{align}\label{eq_x2_1/gamma}
			\begin{split}
				x_2\leq\frac{1}{\gamma}&\Longleftrightarrow \sqrt{\Delta}\leq\frac{1-\gamma}{\gamma}\big(2\lambda-\gamma(n-1)\big)\\
				&\Longleftrightarrow (n-1)^2\gamma^2+4\lambda^2-4\lambda\gamma(n-1)+(n-1)^2(1-2\gamma)\\
				&\phantom{\Longleftrightarrow}\leq\frac{(1-\gamma)^2}{\gamma^2}\big(4\lambda^2+\gamma^2(n-1)^2-4\lambda\gamma(n-1)\big)\\
				&\Longleftrightarrow0\leq\frac{4\lambda}{\gamma^2}\big(1-2\gamma\big)\big(\lambda-\gamma(n-1)\big)
			\end{split}
		\end{align}
		and this last inequality is  true by hypothesis. Therefore, only the fixed point $(x_2,\frac{1-\gamma x_2}{\lambda})$ is non-negative.
		
		Next, suppose that $\lambda<(n-1)\gamma$. It is obvious that $x_2\leq x_1$. Reasoning similarly as we did in \eqref{eq_x2_pos}, we get that $x_2>0$ (and, consequently, $x_1> 0$). Moreover,  $x_1\leq\frac{1}{\gamma}$ if and only if $\gamma=\frac{1}{2}$. Indeed, analogously to \eqref{eq_x2_1/gamma}, we have:
		\begin{align}\label{eq_x1_1/gamma}
			\begin{split}
				x_1\leq\frac{1}{\gamma}&\Longleftrightarrow \sqrt{\Delta}\leq\frac{1-\gamma}{\gamma}\big(\gamma(n-1)-2\lambda\big)\Longleftrightarrow0\leq\frac{4\lambda}{\gamma^2}\big(1-2\gamma\big)\big(\lambda-\gamma(n-1)\big)
			\end{split}
		\end{align}
		and this is verified if and only if $\gamma=\frac{1}{2}$. Therefore,   $(x_1,\frac{1-\gamma x_1}{\lambda})$ and $(x_2,\frac{1-\gamma x_2}{\lambda})$ are both non-negative fixed points for $\gamma=\frac{1}{2}$. In any case,  as $\lambda<(n-1)\gamma$, observe that 
		\[x_2\leq\frac{1}{\gamma}\Longleftrightarrow \sqrt{\Delta}\geq\frac{1-\gamma}{\gamma}\big(\gamma(n-1)-2\lambda\big).\]
		Note that if $\frac{1-\gamma}{\gamma}\big(\gamma(n-1)-2\lambda\big)\leq 0$, the inequality is true. For $\frac{1-\gamma}{\gamma}\big(\gamma(n-1)-2\lambda\big)> 0$, following a reasoning similar to \eqref{eq_x1_1/gamma}, we obtain that  $$\sqrt{\Delta}\geq\frac{1-\gamma}{\gamma}\big(\gamma(n-1)-2\lambda\big)\Longleftrightarrow\frac{4\lambda}{\gamma^2}\big(1-2\gamma\big)\big(\lambda-\gamma(n-1)\big)\leq0,$$ but this is always true. Hence,  the fixed point $(x_2,\frac{1-\gamma x_2}{\lambda})$ is non-negative.
	\end{proof}
	\begin{corollary}\label{cor:GO_norm}
		Consider the gonosomal operator $V_{\gamma_1,\dots,\gamma_n}$ defined by \eqref{gen_musminutoides}, where $n\in\mathbb{N}$, $n>1$, $0\leq\gamma_i\leq\frac{1}{2}$ for any $i\in\Lambda$ and some non-zero. Then, every non-zero fixed point is non-negative  if and only if $\gamma_1=\frac{1}{2}$ or $\sum_{i=2}^n\gamma_i=(n-1)\gamma_1$.
	\end{corollary}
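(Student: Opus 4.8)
The plan is to push everything down to the two-dimensional system $SV_{n,\gamma,\lambda}$ of \eqref{gen_musminutoides_simp} and then read off the answer from the two propositions that precede this corollary. Set $\gamma:=\gamma_1$ and $\lambda:=\sum_{i=2}^n\gamma_i$. The hypotheses $0\le\gamma_i\le\frac12$ translate into $0\le\gamma\le\frac12$ and $0\le\lambda\le\frac{n-1}{2}$, and the assumption that some $\gamma_i$ is non-zero gives $(\gamma,\lambda)\neq(0,0)$, so the preceding two propositions apply. By the one-to-one, non-negativity-preserving correspondence between the non-zero fixed points of \eqref{gen_musminutoides} and those of \eqref{gen_musminutoides_simp} recorded just after \eqref{gen_musminutoides_simp}, the assertion ``every non-zero fixed point of $V_{\gamma_1,\dots,\gamma_n}$ is non-negative'' is equivalent to ``every non-zero fixed point of $SV_{n,\gamma,\lambda}$ is non-negative'', and the claimed condition ``$\gamma_1=\frac12$ or $\sum_{i=2}^n\gamma_i=(n-1)\gamma_1$'' becomes ``$\gamma=\frac12$ or $\lambda=(n-1)\gamma$''. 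Thus it suffices to establish this equivalence for $SV_{n,\gamma,\lambda}$.

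For the ``if'' direction I would simply run the cases of the previous proposition. If $\lambda=(n-1)\gamma$ with $\lambda\neq0$, item (ii) gives the (unique) fixed point is non-negative; the subcase $\lambda=(n-1)\gamma=0$ is excluded by $(\gamma,\lambda)\neq(0,0)$. If instead $\gamma=\frac12$, then $\lambda\le\frac{n-1}{2}=(n-1)\gamma$: when $\lambda=0$ use item (i) (which requires exactly $\gamma=\frac12$); when $\lambda=(n-1)\gamma$ use item (ii); and when $0<\lambda<(n-1)\gamma$ use item (iii)(b), whose hypothesis for ``every fixed point is non-negative'' is precisely $\gamma=\frac12$. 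So in every subcase every non-zero fixed point is non-negative.

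For the ``only if'' direction I would argue by contraposition: assume $\gamma\neq\frac12$ (hence $\gamma<\frac12$, so the radicand $\Delta=((n-1)\gamma-2\lambda)^2+(n-1)^2(1-2\gamma)$ of \eqref{eq_grad2} is strictly positive) and $\lambda\neq(n-1)\gamma$, and produce a non-zero fixed point of $SV_{n,\gamma,\lambda}$ with a negative coordinate. If $\lambda=0$ then $\gamma\neq0$, and item (i) says the unique non-zero fixed point fails to be non-negative. If $\lambda\neq0$ and $\lambda>(n-1)\gamma$, then $\Delta>0$ makes \eqref{eq_grad2} have two distinct real roots, and the proof of the previous proposition shows the smaller one $x_1$ satisfies $x_1<0$, so $\bigl(x_1,\tfrac{1-\gamma x_1}{\lambda}\bigr)$ is a non-zero fixed point with negative first coordinate. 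If $\lambda\neq0$ and $\lambda<(n-1)\gamma$, then $\gamma>0$, $\Delta>0$ gives distinct roots $x_1>x_2>0$, and that same proof shows $x_1\le\tfrac1\gamma$ holds iff $\gamma=\frac12$; since $\gamma\neq\frac12$ we get $x_1>\tfrac1\gamma$, so $\bigl(x_1,\tfrac{1-\gamma x_1}{\lambda}\bigr)$ is a non-zero fixed point with negative second coordinate. Either way the contrapositive is proved.

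The only point demanding care — and the one I would flag as the ``main obstacle'', though it is a mild one — is ensuring in the ``only if'' cases that the offending root really yields a genuinely non-zero fixed point of the original operator; this is exactly where strict positivity of $\Delta$ (guaranteed by $\gamma<\frac12$) is used to get two distinct fixed points, and where one must check that the degenerate configurations $\lambda=0$, $\gamma=0$, and $\lambda=(n-1)\gamma=0$ are absorbed by the standing assumption $(\gamma,\lambda)\neq(0,0)$. Everything else is bookkeeping on top of Proposition \ref{prop:fix_points_musminutoides} and the proposition immediately preceding the corollary.
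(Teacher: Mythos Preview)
Your proposal is correct and is exactly the kind of case analysis the paper intends: the corollary is stated without proof precisely because it follows immediately from the one-to-one correspondence between non-zero (non-negative) fixed points of $V_{\gamma_1,\dots,\gamma_n}$ and $SV_{n,\gamma,\lambda}$ together with the two preceding propositions, which you have unpacked faithfully. Your handling of the degenerate cases and the use of $\Delta>0$ when $\gamma<\tfrac12$ to guarantee two distinct fixed points (so that the non--non-negative one is genuinely present) is the only subtle point, and you treat it correctly.
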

	
	Next, we consider the normalised version of the gonosomal operator defined by \eqref{gen_musminutoides}, that is, 
	\begin{equation}\label{norm_musminutoides}
		\widetilde{V}_{\gamma_1,\dots,\gamma_n}:\left\{
		\begin{array}{lcccl}
			x_1'&=&u'&=&\frac{u\sum_{i \in \Lambda}\gamma_ix_i}{u\sum_{i \in \Lambda} x_i}, \medskip \\
			x_j'&&&=&\frac{u\sum_{i \in \Lambda}(1-2\gamma_i)x_i}{(n-1)u\sum_{i \in \Lambda} x_i}, \text{ for any }j=2,\dots,n;
		\end{array}\right.
	\end{equation}
	with $n>1$ and $0\leq\gamma_i\leq\frac{1}{2}$ for any $i\in\Lambda$ and some non-zero. Notice that, in general, the previous operator does not map the set $S^{n,1}$ to itself, as shown in the next result.
	\begin{lemma}\label{lem:equiv_inv}
		The operator $\widetilde{V}_{\gamma_1,\dots,\gamma_n}$ defined by \eqref{norm_musminutoides} maps $S^{n,1}$ to itself if and only if $\gamma_i\neq0$ for any $i\in\Lambda$.
	\end{lemma}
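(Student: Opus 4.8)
The plan is to reduce membership of $\widetilde{V}_{\gamma_1,\dots,\gamma_n}(s)$ in $S^{n,1}$ to a single positivity condition and then to read off when that condition can fail. First I would record the easy facts that hold for \emph{every} $s=(x_1,\dots,x_n,u)\in S^{n,1}$: since $u>0$ and $\sum_{i\in\Lambda}x_i>0$, the common denominator $u\sum_{i\in\Lambda}x_i$ is strictly positive, so $\widetilde{V}_{\gamma_1,\dots,\gamma_n}(s)$ is well defined; every output coordinate is nonnegative, because $0\le\gamma_i\le\tfrac12$ forces $\gamma_i\ge0$ and $1-2\gamma_i\ge0$; and the output coordinates sum to $1$, because $\gamma_i+(1-2\gamma_i)+\gamma_i=1$ for each $i$. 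Consequently $\widetilde{V}_{\gamma_1,\dots,\gamma_n}(s)\in S^{n,1}$ if and only if its first coordinate $x_1'=u'=\dfrac{\sum_{i\in\Lambda}\gamma_ix_i}{\sum_{i\in\Lambda}x_i}$ is strictly positive, since the remaining requirement that the sum of the $x$-coordinates be positive is then automatic (that sum is at least $x_1'$).

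For the ``if'' direction, assume $\gamma_i\neq0$, i.e.\ $\gamma_i>0$, for all $i\in\Lambda$. Given any $s\in S^{n,1}$, the condition $\sum_{i\in\Lambda}x_i>0$ provides an index $i_1$ with $x_{i_1}>0$, whence $\sum_{i\in\Lambda}\gamma_ix_i\ge\gamma_{i_1}x_{i_1}>0$; therefore $u'=x_1'>0$ and, by the previous paragraph, $\widetilde{V}_{\gamma_1,\dots,\gamma_n}(s)\in S^{n,1}$.

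For the ``only if'' direction I would argue by contraposition: suppose $\gamma_{i_0}=0$ for some $i_0\in\Lambda$, and take the point $s$ with $x_{i_0}=t$, $u=1-t$ for a fixed $t\in(0,1)$, and all remaining coordinates equal to $0$. Then $s\in S^{n,1}$ and the denominator $u\sum_{i\in\Lambda}x_i=(1-t)t$ is positive, but $\sum_{i\in\Lambda}\gamma_ix_i=\gamma_{i_0}t=0$, so $u'=0$ and hence $\widetilde{V}_{\gamma_1,\dots,\gamma_n}(s)\notin S^{n,1}$.

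There is essentially no obstacle here: the statement is a direct consequence of the coefficient structure in \eqref{al_gen_musminutoides}, and it also follows at once from the general criterion \cite[Proposition~10.3]{R_19}, since the vector of inheritance coefficients attached to $f_i$, namely $(\gamma_i,\tfrac{1-2\gamma_i}{n-1},\dots,\tfrac{1-2\gamma_i}{n-1},\gamma_i)$, lies in $S^{n,1}$ precisely when $\gamma_i>0$. The only point demanding a little care is that $S^{n,1}$ requires \emph{strict} positivity of the male coordinate $u$, which is exactly what breaks down as soon as some $\gamma_i$ vanishes.
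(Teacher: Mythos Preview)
Your proof is correct and takes essentially the same approach as the paper: the paper's one-line argument simply observes that $(\gamma_i,\tfrac{1-2\gamma_i}{n-1},\dots,\tfrac{1-2\gamma_i}{n-1},\gamma_i)\in S^{n,1}$ if and only if $\gamma_i\neq0$, implicitly invoking the criterion from \cite[Proposition~10.3]{R_19} that you also cite at the end. Your version is more explicit, spelling out both directions directly, but the underlying idea is identical.
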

	\begin{proof}
		It follows straightforward since $(\gamma_i,\frac{1-2\gamma_i}{n-1},\dots,\frac{1-2\gamma_i}{n-1},\gamma_i)\in S^{n,1}$ if and only if $\gamma_i\neq0$ for any $i\in\Lambda$.
		
	\end{proof}
Thanks to the following result, it is not necessary to restrict the values of  $\{\gamma_i\}_{i \in \Lambda}$ so much. Instead, it suffices to take an invariant subset of $S^{n,1}$  under  $\widetilde{V}_{\gamma_1,\dots,\gamma_n}$. Actually, we will determine the largest invariant subset. To do this, let $\widetilde{V}_{\gamma_1,\dots,\gamma_n}$ be the normalised gonosomal operator defined by \eqref{norm_musminutoides} and consider the following two subsets of $S^{n,1}$, which depend on the parameters $\gamma_1,\dots,\gamma_n$:
	\begin{align*}
		R_{\gamma_1,\dots,\gamma_n}&=\{(x_1,\ldots,x_n,u)\in S^{n,1} \colon \gamma_ix_i>0 \text{ for some } i\in \Lambda\},\\
		T_{\gamma_1,\dots,\gamma_n}&=\{(x_1,\ldots,x_n,u)\in S^{n,1} \colon   (1-2\gamma_i)x_i>0 \text{ for some } i\in \Lambda\}.
	\end{align*} For simplicity, if there is no risk of confusion, we will denote these sets as $R$ and $T$. We establish the next result.

	\begin{proposition}\label{prop:inv_subsets}
		In the setting above, if $\gamma_1 > 0$, then $R$ is the biggest invariant with respect to $\widetilde{V}_{\gamma_1,\dots,\gamma_n}$. However, if $\gamma_1 = 0$, then such subset is $R\cap T$.
	\end{proposition}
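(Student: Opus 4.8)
The plan is to reduce the statement to a single pointwise observation about the operator \eqref{norm_musminutoides}: for a point $s=(x_1,\dots,x_n,u)\in S^{n,1}$, the image $\widetilde{V}_{\gamma_1,\dots,\gamma_n}(s)$ lies in $S^{n,1}$ if and only if $s\in R$. Indeed, since $0\le\gamma_i\le\frac12$ for all $i$, every coordinate of $\widetilde{V}_{\gamma_1,\dots,\gamma_n}(s)$ is non-negative, and a direct check (valid for any normalised gonosomal operator) gives $\sum_{i\in\Lambda}x_i'+u'=1$; moreover $u'=x_1'=\bigl(\sum_{i\in\Lambda}\gamma_ix_i\bigr)/\bigl(\sum_{i\in\Lambda}x_i\bigr)$, whose denominator is positive on $S^{n,1}$, so $u'>0$ exactly when $\sum_{i\in\Lambda}\gamma_ix_i>0$, i.e. when $s\in R$, and in that case $\sum_{i\in\Lambda}x_i'\ge x_1'=u'>0$ automatically. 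Consequently, the largest invariant subset of $S^{n,1}$ is precisely $\mathcal{I}:=\{s\in S^{n,1}\colon \widetilde{V}_{\gamma_1,\dots,\gamma_n}^{k}(s)\in S^{n,1}\text{ for every }k\in\N\}$, and the whole proof amounts to computing $\mathcal{I}$.

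Assume first $\gamma_1>0$. I would show $R$ is invariant: if $s\in R$ then $x_1'>0$, hence $\sum_{i\in\Lambda}\gamma_ix_i'\ge\gamma_1x_1'>0$, so $\widetilde{V}_{\gamma_1,\dots,\gamma_n}(s)\in R$; by induction all forward iterates stay in $R\subseteq S^{n,1}$, whence $R\subseteq\mathcal{I}$. Conversely, if $s\in\mathcal{I}$ then in particular $\widetilde{V}_{\gamma_1,\dots,\gamma_n}(s)\in S^{n,1}$, so the pointwise observation forces $s\in R$; thus $\mathcal{I}\subseteq R$, and therefore $\mathcal{I}=R$.

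Now assume $\gamma_1=0$. Here we use that $(\gamma_1,\dots,\gamma_n)\ne(0,\dots,0)$ yields $\lambda:=\sum_{i=2}^n\gamma_i>0$, and that $x_2'=\dots=x_n'$ always holds, so $\sum_{i\in\Lambda}\gamma_ix_i'=\lambda x_2'$. Combining this with the pointwise observation gives the key equivalence: for $s\in S^{n,1}$, one has $\widetilde{V}_{\gamma_1,\dots,\gamma_n}(s)\in R$ if and only if $s\in R$ (so that the image lies in $S^{n,1}$) and $x_2'>0$; and since $x_2'>0$ is equivalent to $\sum_{i\in\Lambda}(1-2\gamma_i)x_i>0$, i.e. to $s\in T$, we conclude that $\widetilde{V}_{\gamma_1,\dots,\gamma_n}(s)\in R$ exactly when $s\in R\cap T$. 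From this: if $s\in R\cap T$ then $\widetilde{V}_{\gamma_1,\dots,\gamma_n}(s)\in R$, and also $\widetilde{V}_{\gamma_1,\dots,\gamma_n}(s)\in T$ because $\sum_{i\in\Lambda}(1-2\gamma_i)x_i'\ge(1-2\gamma_1)x_1'=x_1'>0$; hence $R\cap T$ is invariant and $R\cap T\subseteq\mathcal{I}$. Conversely, if $s\in\mathcal{I}$ then $\widetilde{V}_{\gamma_1,\dots,\gamma_n}(s)\in S^{n,1}$ and $\widetilde{V}_{\gamma_1,\dots,\gamma_n}^{2}(s)\in S^{n,1}$; the first gives $s\in R$, and the second — the pointwise observation applied to the point $\widetilde{V}_{\gamma_1,\dots,\gamma_n}(s)$ — gives $\widetilde{V}_{\gamma_1,\dots,\gamma_n}(s)\in R$, so by the equivalence $s\in R\cap T$. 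Therefore $\mathcal{I}=R\cap T$.

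I do not expect a genuine obstacle; the whole argument is a careful unwinding of \eqref{norm_musminutoides} together with the pointwise observation. The two things to be attentive to are the bookkeeping of which image coordinates are strictly positive — entirely governed by the single identity $u'=x_1'=\bigl(\sum_{i\in\Lambda}\gamma_ix_i\bigr)/\bigl(\sum_{i\in\Lambda}x_i\bigr)$ — and the fact that when $\gamma_1=0$ one needs \emph{two} iterations, not one, to detect membership in $T$; this is exactly why the maximal invariant set drops from $R$ to $R\cap T$ in that case.
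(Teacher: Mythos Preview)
Your proof is correct and follows essentially the same approach as the paper: you verify invariance of the claimed set by a one-step computation and establish maximality by showing that points outside leave $S^{n,1}$ after one step (when $s\notin R$) or two steps (when $\gamma_1=0$ and $s\in R\setminus T$). Your framing via the pointwise observation $\widetilde{V}(s)\in S^{n,1}\Leftrightarrow s\in R$ and the set $\mathcal{I}$ is a bit more systematic than the paper's direct computations, but the underlying argument is the same.
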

	\begin{proof}
		Let $s^{(0)}=(x_1,\ldots,x_n,u)\in S^{n,1}$.    First, notice that by the construction of the normalised gonosomal operator, $s^{(1)}$ always belongs to the simplex of $\mathbb{R}^{n+1}$.
		For the first part, suppose that $s^{(0)} \in R$.
		It is clear that $s^{(1)} \in R$ since
		\[x^{(1)}=u^{(1)}=\frac{u\sum_{i \in \Lambda}\gamma_ix_i}{u\sum_{i \in \Lambda}x_i}>0.\]
		To prove that $R$ is the biggest invariant contained in $S^{n,1}$ just consider an initial point $s^{(0)}\in S^{n,1}\setminus R$. So, $u^{(1)}=0$ and thus $s^{(1)}\notin S^{n,1}$.
		For the second part, assume that $\gamma_1=0$ and consider an initial point $s^{(0)}=(x_1,\ldots,x_n,u) \in R\cap T$. Then, we know there exist $i,j \in \Lambda$ such that $x_i\gamma_i>0$ and $(1-2\gamma_j)x_j>0$. Consequently,
		\[x^{(1)}=u^{(1)}=\frac{u\sum_{i \in \Lambda}\gamma_ix_i}{u\sum_{i \in \Lambda}x_i}>0\quad\text{and}\quad x_k^{(1)}=\frac{u\sum_{i \in \Lambda}(1-2\gamma_i)x_i}{(n-1)u\sum_{i \in \Lambda}x_i}>0\]
		for any $k=2,\dots,n$. Hence, there exist $i,j\in\Lambda$ such that $\gamma_ix_i^{(1)} >0$ and $(1-2\gamma_j)x_j^{(1)}>0$, which yields that  $s^{(1)} \in T$. Now, we have to see that $R\cap T$ is the largest invariant set contained in $S^{n,1}$. Consider $s^{(0)}\in S^{n,1}\setminus R$, so $x_i\gamma_i=0$ for any $i\in \Lambda$. Thus, $u^{(1)}=0$ and accordingly $s^{(1)}\notin S^{n,1}$. Now, we take an element 
		$s^{(0)}\in S^{n,1}\setminus T$, implying that $(1-2\gamma_i)x_i=0$ for all $i\in \Lambda$. Therefore $x_k^{(1)}=0$ for any $k=2,\dots,n$ and thus $u^{(2)}=0$ which implies that $s^{(2)}\notin S^{n,1}$. 
	\end{proof}

	Next, we study the limits of \eqref{norm_musminutoides} in cases where every non-zero fixed point of $V_{\gamma_1,\dots,\gamma_n}$ is non-negative. This happens whether $\gamma_1=\frac{1}{2}$ or $\sum_{i=2}^n\gamma_i=(n-1)\gamma_1$ by Corollary \ref{cor:GO_norm}. Notice that,  necessarily  $\gamma_1\neq0$, then, by Proposition~\ref{prop:inv_subsets}, $R$ is the largest invariant subset of $S^{n,1}$ with respect to $\widetilde{V}_{\gamma_1,\dots,\gamma_n}$. 
    Again, to carry out this study, we consider the ``normalised'' version of \eqref{gen_musminutoides_simp}, which will be given by 
	\begin{equation}\label{norm_musminutoides_simp}
		\widetilde{SV}_{n,\gamma,\lambda}:\left\{
		\begin{array}{lcl}
			x'&=&\frac{x(\gamma x+\lambda y)}{x(x+(n-1)y)}, \medskip  \\
			y'&=&\frac{x((1-2\gamma) x+(n-1-2\lambda)y)}{x(n-1)(x+(n-1)y)},
		\end{array}\right.
	\end{equation}
      with  $n\in\mathbb{N},n>1$, $0 < \gamma=\gamma_1 \leq \frac{1}{2}$ and $0 \leq \lambda=\sum_{i=2}^n\gamma_i \leq \frac{n-1}{2}$. Observe that the set
	\[
	\widetilde{S^{n,1}}:=\{(x,y)\in\mathbb{R}^2\colon x>0,y\geq0,2x+(n-1)y=1\}
	\]
	is invariant with respect to $\widetilde{SV}_{n,\gamma,\lambda}$. 
	
	\begin{theorem}\label{th:lim_musminutoides_gen}
		In the setting above, if $\gamma=\frac{1}{2}$ or $\lambda=(n-1)\gamma$, then, for any initial point $s=(x^{(0)},y^{(0)})\in\widetilde{S^{n,1}}$, we get that $\lim_{m\to\infty}\widetilde{SV}_{n,\gamma,\lambda}^m(s)$ exists. In fact, we have:
		\begin{enumerate}[\rm (i)]
			\item if $\gamma\neq\frac{1}{2}$, then $\lim_{k \to \infty} \widetilde{SV}_{n,\gamma,\lambda}^k(s)=\left(\gamma,\frac{1-2\gamma}{n-1}\right)$.\smallskip
			\item if $\gamma=\frac{1}{2}$, then we distinguish the two following cases:\smallskip
			\begin{enumerate}[\rm (a)]
				\item if $\lambda\geq\frac{n-1}{4}$, then
				$\lim_{k\to\infty} \widetilde{SV}_{n,\gamma,\lambda}^k(s)= \big(\frac{1}{2},0\big)$;\smallskip
				\item if $\lambda<\frac{n-1}{4}$, then
				\[ \lim_{k\to\infty} \widetilde{SV}_{n,\gamma,\lambda}^k(s)= \left\{
				\begin{array}{ll}
					\big(\frac{1}{2},0\big), & {\rm if} \ s=\big(\frac{1}{2},0\big), \medskip \\
					\left(\frac{2\lambda}{n-1},\frac{n-1-4\lambda}{(n-1)^2}\right), & {\rm otherwise.} 
				\end{array}
				\right.\]
			\end{enumerate}         
		\end{enumerate}
	\end{theorem}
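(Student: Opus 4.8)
The plan is to use the invariance of $\widetilde{S^{n,1}}$ to collapse the two–dimensional system $\widetilde{SV}_{n,\gamma,\lambda}$ to the iteration of a single fractional–linear map. Every point of $\widetilde{S^{n,1}}$ satisfies $2x+(n-1)y=1$, so $x=\tfrac{1-(n-1)y}{2}$ and $x+(n-1)y=1-x$; substituting these relations into the formulas defining $\widetilde{SV}_{n,\gamma,\lambda}$, the second coordinate evolves under the Möbius map
\[
g(y)=\frac{(1-2\gamma)+\big((n-1)(1+2\gamma)-4\lambda\big)y}{(n-1)\big(1+(n-1)y\big)},\qquad y\in\Big[0,\tfrac{1}{n-1}\Big),
\]
and the first coordinate is recovered from $x=\tfrac{1-(n-1)y}{2}$. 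Thus the whole statement reduces to iterating $g$ on an interval, which is elementary, and at the end one translates back.

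First I would treat part (i), which under the standing hypothesis forces $\lambda=(n-1)\gamma$. In this case the coefficient of $y$ in the numerator of $g$ equals $(n-1)(1-2\gamma)$, so the numerator factors as $(1-2\gamma)\big(1+(n-1)y\big)$ and $g$ is the constant map $g\equiv\tfrac{1-2\gamma}{n-1}$; correspondingly $x'\equiv\gamma$. Hence every orbit reaches the point $\big(\gamma,\tfrac{1-2\gamma}{n-1}\big)$ after a single step; since $\gamma>0$ this point lies in $\widetilde{S^{n,1}}$ and is fixed, so it is the limit.

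Next, for part (ii) I would set $\gamma=\tfrac12$, so that $1-2\gamma=0$ and $g$ reduces to $g(y)=\dfrac{2(n-1-2\lambda)\,y}{(n-1)\big(1+(n-1)y\big)}$, a map of the form $y\mapsto\dfrac{ay}{1+by}$ with $a=\tfrac{2(n-1-2\lambda)}{n-1}\ge 0$ and $b=n-1>0$, fixing $0$. The substitution $z=1/y$ linearises the recursion to $z'=a^{-1}z+a^{-1}b$ when $a>0$, whose iterates are explicit; passing to the limit: if $a\le 1$, equivalently $\lambda\ge\tfrac{n-1}{4}$ (the case $a=0$, $\lambda=\tfrac{n-1}{2}$ being trivial since then $g\equiv 0$), then $z^{(k)}\to\infty$, so $y^{(k)}\to 0$ and the limit is $\big(\tfrac12,0\big)$; if $a>1$, equivalently $\lambda<\tfrac{n-1}{4}$, the nonzero fixed point of $g$ is $y^{\ast}=\tfrac{a-1}{b}=\tfrac{n-1-4\lambda}{(n-1)^2}>0$, and $z^{(k)}\to\tfrac{b}{a-1}$ for every starting value with $y^{(0)}\neq 0$, so $y^{(k)}\to y^{\ast}$, while $y^{(0)}=0$ (that is $s=(\tfrac12,0)$) is fixed; translating back via $x=\tfrac{1-(n-1)y}{2}$ gives the two limits $\big(\tfrac12,0\big)$ and $\big(\tfrac{2\lambda}{n-1},\tfrac{n-1-4\lambda}{(n-1)^2}\big)$ asserted in (ii)(a)–(b).

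I do not anticipate any serious obstacle: once the one–dimensional reduction is set up, everything follows from a linear recursion. The only points requiring a little care are checking that the iterates remain in the admissible interval $\big[0,\tfrac{1}{n-1}\big)$ — which is immediate from the invariance of $\widetilde{S^{n,1}}$ already noted — isolating the exceptional orbit $s=(\tfrac12,0)$ in case (ii)(b), and handling the borderline value $\lambda=\tfrac{n-1}{4}$, where $a=1$ and the recursion degenerates to $z'=z+b$, still giving $z^{(k)}\to\infty$ and hence $y^{(k)}\to 0$. The main (and rather modest) task is just to organise the case split according to the position of $\lambda$ relative to $\tfrac{n-1}{4}$, i.e. the sign of $a-1$.
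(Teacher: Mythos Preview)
Your argument is correct. Part (i) is handled identically in both your write-up and the paper: with $\lambda=(n-1)\gamma$ the map collapses to a constant after one step. For part (ii), however, your approach differs from the paper's. The paper keeps the recursion for $y^{(k)}$ in its original rational form and proves convergence qualitatively: it shows that $\{y^{(k)}\}$ is monotone (increasing or decreasing according to whether $y^{(0)}$ lies below or above the nonzero fixed point) and bounded, hence convergent, and then identifies the limit by passing to the limit in the recursion. You instead exploit the M\"obius structure directly, substituting $z=1/y$ to obtain the affine recursion $z'=a^{-1}z+a^{-1}b$, which you solve explicitly. Your route is shorter and yields the exact rate of convergence for free; the paper's monotonicity argument is slightly more self-contained in that it never leaves the original variable and does not require singling out $y=0$ before changing variables, but the net content is the same. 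Either argument is entirely adequate here.
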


	\begin{proof}
		Let $s=(x^{(0)},y^{(0)})\in\widetilde{S^{n,1}}$. For item (i), notice that $x'=\frac{\gamma x+(n-1)\gamma y}{x+(n-1)y}=\gamma$ and $y'=\frac{(1-2\gamma)x+(n-1)(1-2\gamma)y}{(n-1)(x+(n-1)y)}=\frac{1-2\gamma}{n-1}$, which do not depend on the initial point. The dynamical system converges to this point in the first iteration.  
		For item (ii), as $2x^{(k)}+(n-1)y^{(k)}=1$ for any $k \in \N$, it is enough to study the limits of the sequence $\{y^{(k)}\}_{k \in \N}$. We can write		
		\begin{equation}\label{sucesion}
			y^{(k+1)}=\frac{(n-1-2\lambda)y^{(k)}}{(n-1)\big(x^{(k)}+(n-1)y^{(k)}\big)}=\frac{2(n-1-2\lambda)y^{(k)}}{(n-1)\big(1+(n-1)y^{(k)}\big)}
		\end{equation}
		for any $k \in \N$. If $y^{(0)}=0$, clearly $\displaystyle\lim_{k\to\infty} y^{(k)}=0$. On the other hand, we have that:
		\begin{align}\label{ineq2}
					y^{(k)}\left\{
			\begin{array}{ll}
				<\frac{n-1-4\lambda}{(n-1)^2}&\Longleftrightarrow \ y^{(0)} < \frac{n-1-4\lambda}{(n-1)^2}, \\
				>\frac{n-1-4\lambda}{(n-1)^2} &\Longleftrightarrow \ y^{(0)} > \frac{n-1-4\lambda}{(n-1)^2}, \\
				=\frac{n-1-4\lambda}{(n-1)^2} &\Longleftrightarrow \ y^{(0)} = \frac{n-1-4\lambda}{(n-1)^2};  \\
			\end{array}
			\right.
					\end{align}
		because $y^{(k+1)}<\frac{n-1-4\lambda}{(n-1)^2} \Leftrightarrow y^{(k)}<\frac{n-1-4\lambda}{(n-1)^2}$.
		From this, and the fact that $y^{(k+1)}>y^{(k)}\Leftrightarrow y^{(k)}<\frac{n-1-4\lambda}{(n-1)^2}$, we get that
		\begin{align}\label{ineq3}
			y^{(k+1)}\left\{ 
			\begin{array}{ll}
				> y^{(k)} &\Longleftrightarrow \ y^{(0)} < \frac{n-1-4\lambda}{(n-1)^2},  \\
				< y^{(k)} &\Longleftrightarrow \ y^{(0)} > \frac{n-1-4\lambda}{(n-1)^2},  \\
				= \frac{n-1-4\lambda}{(n-1)^2}&\Longleftrightarrow \ y^{(0)} = \frac{n-1-4\lambda}{(n-1)^2}.  \\
			\end{array}
			\right. 
				\end{align}
		Therefore, $\{y^{(k)}\}_{k \in \N}$ is an increasing (resp. decreasing) and upper (resp. lower) bounded sequence, which implies that its limit always exists. Now, we note that necessarily $\lim_{k\to\infty} x^{(k)}\neq0$. Indeed, if $\lim_{k\to\infty} x^{(k)}=0$ then  $\lim_{k\to\infty}  x^{(k)}+(n-1)y^{(k)}=1$. So, $\lim_{k\to\infty} \lambda y^{(k)} =0$ a contradiction since  $2x^{(k)}+(n-1)y^{(k)}=1$ for any $k \in \N$. Taking this into account and taking limits in both sides of \eqref{sucesion}, we get that $L_y\big((n-1)^2L_y+4\lambda-(n-1)\big)=0$, that is, $L_y=0$ or $L_y=\frac{n-1-4\lambda}{(n-1)^2}$.  Observe that if $n-1=4\lambda \neq 0$, then  $L_y=0$. Now, since $\widetilde{S^{n,1}}$ is an invariant subset with respect to $\widetilde{SV}_{n,\gamma,\lambda}$, we have that $y^{(k)}\geq 0$ for all $k \in \N$. Hence $L_y\geq 0$. So, if  $\lambda >\frac{n-1}{4}$,  necessarily $L_y=0$.  For  $\lambda <\frac{n-1}{4}$, whether $y^{(0)}< \frac{n-1-4\lambda}{(n-1)^2}$ or  $y^{(0)}> \frac{n-1-4\lambda}{(n-1)^2}$ happens,  applying \eqref{ineq2} and \eqref{ineq3}, we obtain that $L_y=\frac{n-1-4\lambda}{(n-1)^2}$.
		Finally, the result follows from the fact that $x^{(k)}=\frac{1-(n-1)y^{(k)}}{2}$ for any $k\geq0$.	
	\end{proof}

    \begin{remark}\label{rem:aclaracion}
        Let $s=(x_1^{(0)},\dots,x_n^{(0)},u^{(0)})\in R$. For any $k\geq1$, it is easy to check that  
        \begin{align*}
    \widetilde{SV}_{n,\gamma,\lambda}^k(x_1^{(1)},x_2^{(1)})=(x_1^{(k+1)},x_2^{(k+1)})
       \end{align*} 
         where $x_1^{(1)}$, $x_2^{(1)}$, $x_1^{(k+1)}$ and $x_2^{(k+1)}$ are given by $\widetilde{V}_{\gamma_1,\dots,\gamma_n}(s)=(x_1^{(1)},x_2^{(1)},\dots,x_n^{(1)},u^{(1)})$  and $\widetilde{V}^{k+1}_{\gamma_1,\dots,\gamma_n}(s)=(x_1^{(k+1)},x_2^{(k+1)},\dots,x_n^{(k+1)},u^{(k+1)})$ for any $k\geq1$.
    \end{remark}
         Hence, we establish the following corollary.

	\begin{corollary}\label{cor:NGO}
		Consider the gonosomal operator $V_{\gamma_1,\dots,\gamma_n}$ given by \eqref{gen_musminutoides}, whose fixed points are non-negative and normalisable. Let $\widetilde{V}_{\gamma_1,\dots,\gamma_n}$ be its corresponding normalised gonosomal operator. Then, for any initial point $s\in R$, we get that $\displaystyle\lim_{k\to\infty}\widetilde{V}_{\gamma_1,\dots,\gamma_n}^k(s)$ exists.
	\end{corollary}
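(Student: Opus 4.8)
The plan is to reduce the $(n+1)$-dimensional dynamics of $\widetilde{V}_{\gamma_1,\dots,\gamma_n}$ to the two-dimensional system $\widetilde{SV}_{n,\gamma,\lambda}$ of \eqref{norm_musminutoides_simp} and then invoke Theorem \ref{th:lim_musminutoides_gen}. First, since by hypothesis every non-zero fixed point of $V_{\gamma_1,\dots,\gamma_n}$ is non-negative and normalisable, Corollary \ref{cor:GO_norm} gives that either $\gamma_1=\frac{1}{2}$ or $\sum_{i=2}^n\gamma_i=(n-1)\gamma_1$; writing $\gamma=\gamma_1$ and $\lambda=\sum_{i=2}^n\gamma_i$ as in \eqref{gen_musminutoides_simp}, this reads $\gamma=\frac{1}{2}$ or $\lambda=(n-1)\gamma$. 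In both cases $\gamma_1>0$, since $\gamma_1=0$ would force $\lambda=0$ as well, contradicting $(\gamma,\lambda)\neq(0,0)$. Hence, by Proposition \ref{prop:inv_subsets}, $R$ is the largest invariant subset of $S^{n,1}$ with respect to $\widetilde{V}_{\gamma_1,\dots,\gamma_n}$; in particular, every trajectory starting at $s\in R$ remains in $R\subset S^{n,1}$, so all coordinates below are well defined and non-negative.

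Next I would pass to the first iterate. Fix $s=(x_1^{(0)},\dots,x_n^{(0)},u^{(0)})\in R$ and set $s^{(1)}=\widetilde{V}_{\gamma_1,\dots,\gamma_n}(s)$. From the shape of \eqref{norm_musminutoides} one has $x_1^{(1)}=u^{(1)}$ and $x_2^{(1)}=\dots=x_n^{(1)}$, while $x_1^{(1)}=u^{(1)}>0$ because $s\in R$; moreover $2x_1^{(1)}+(n-1)x_2^{(1)}=x_1^{(1)}+(n-1)x_2^{(1)}+u^{(1)}=\sum_{i\in\Lambda}x_i^{(1)}+u^{(1)}=1$ and $x_2^{(1)}\geq0$, so the pair $(x_1^{(1)},x_2^{(1)})$ lies in $\widetilde{S^{n,1}}$. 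By Remark \ref{rem:aclaracion} we then have $\widetilde{SV}_{n,\gamma,\lambda}^{k}(x_1^{(1)},x_2^{(1)})=(x_1^{(k+1)},x_2^{(k+1)})$ for all $k\geq1$, so the trajectory of the full operator is completely governed, from the first step on, by the trajectory of $\widetilde{SV}_{n,\gamma,\lambda}$ issued from $(x_1^{(1)},x_2^{(1)})\in\widetilde{S^{n,1}}$.

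Finally, since $\gamma=\frac{1}{2}$ or $\lambda=(n-1)\gamma$, Theorem \ref{th:lim_musminutoides_gen} applies to the initial point $(x_1^{(1)},x_2^{(1)})\in\widetilde{S^{n,1}}$ and yields that $\lim_{k\to\infty}(x_1^{(k+1)},x_2^{(k+1)})$ exists; call it $(L_1,L_2)$. Hence $\lim_{k\to\infty}x_1^{(k)}=L_1$, and because $x_j^{(k)}=x_2^{(k)}$ for $2\leq j\leq n$ and $u^{(k)}=x_1^{(k)}$ for all $k\geq1$, we also obtain $\lim_{k\to\infty}x_j^{(k)}=L_2$ and $\lim_{k\to\infty}u^{(k)}=L_1$. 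Therefore $\lim_{k\to\infty}\widetilde{V}_{\gamma_1,\dots,\gamma_n}^{k}(s)=(L_1,L_2,\dots,L_2,L_1)$ exists, as claimed. I do not expect a genuine obstacle here: the only care needed is the bookkeeping that translates between the two systems and the check that the reduced initial datum lands in $\widetilde{S^{n,1}}$; once Theorem \ref{th:lim_musminutoides_gen} and Remark \ref{rem:aclaracion} are available, the monotonicity argument that produces the limit has already been carried out, and nothing analytic remains.
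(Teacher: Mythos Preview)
Your proof is correct and follows exactly the route the paper takes: it invokes Corollary~\ref{cor:GO_norm} to obtain $\gamma=\tfrac12$ or $\lambda=(n-1)\gamma$, uses Remark~\ref{rem:aclaracion} to reduce the dynamics to $\widetilde{SV}_{n,\gamma,\lambda}$, and then applies Theorem~\ref{th:lim_musminutoides_gen}. The paper's own proof is a one-line citation of these three results (with the observation $\gamma_1>0$ and the invariance of $R$ already recorded in the discussion preceding the corollary), so your argument is simply a fully spelled-out version of the same strategy.
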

	\begin{proof}
        It follows straightforwardly from Corollary \ref{cor:GO_norm}, Theorem \ref{th:lim_musminutoides_gen} and Remark \ref{rem:aclaracion}. 
	\end{proof}
	
	Finally, we apply the previous results to study the dynamic behaviour of a concrete population.

	\begin{example}[see \cite{MABDS_03,VCCJal_09}]
		This first example models some rodent populations, such as \textit{Myopus schisticolor} (wood lemming) and \textit{Mus minutoides} (African pygmy mouse). In this case, we describe three female genotypes: $f_1\leftrightarrow \text{XX}$, $f_2\leftrightarrow \text{XX*}$ and $f_3\leftrightarrow \text{X*Y}$; and only one male genotype: $h\leftrightarrow \text{XY}$.
		Then, as explained in \cite[Example 16]{V_16}, applying Construction~\ref{const_1}, it is possible to obtain the following gonosomal algebra, which realises the results of crosses:
		\begin{align}\label{al_musminutoides}
			f_1h=\frac{1}{2}f_1+\frac{1}{2}h,\quad f_2h=\frac{1}{4}f_1+\frac{1}{4}f_2+\frac{1}{4}f_3+\frac{1}{4}h\quad\text{and}\quad f_3h=\frac{1}{2}f_2+\frac{1}{2}f_3.
		\end{align}
		Therefore, the corresponding (normalised) gonosomal operators are given by 
		\begin{equation}\label{op_musminutoides}
			V:\left\{
			\begin{array}{ll}
				x_1'=u'=\frac{1}{2}x_1u+\frac{1}{4}x_2u,\medskip \\
				x_2'=x_3'=\frac{1}{4}x_2u+\frac{1}{2}x_3u;
			\end{array}\right.
			\quad\text{and}\quad
			\widetilde{V}:\left\{
			\begin{array}{ll}
				x_1'=u'=\frac{u(2x_1+x_2)}{4u(x_1+x_2+x_3)},\medskip \\
				x_2'=x_3'=\frac{u(x_2+2x_3)}{4u(x_1+x_2+x_3)}.
			\end{array}\right.
		\end{equation}
		Notice that if $n=3$, $\gamma_1=\frac{1}{2},\gamma_2=\frac{1}{4}$ and $\gamma_3=0$, then the gonosomal algebra \eqref{al_gen_musminutoides}, the gonosomal operator \eqref{gen_musminutoides} and the normalised gonosomal operator \eqref{norm_musminutoides} correspond exactly with \eqref{al_musminutoides} and \eqref{op_musminutoides}, respectively.
		As $\gamma_1,\gamma_2\neq0$ but $\gamma_3=0$ then, by Proposition \ref{prop:inv_subsets}, we have that
		\[
		R=\{(x_1,x_2,x_3,u)\in S^{3,1}\colon x_1\neq0\text{ or }x_2\neq0\}
		\]
		is the biggest subset of $S^{3,1}$ which $\widetilde{V}$ maps to itself. Consequently,  Theorem \ref{th:lim_musminutoides_gen} and Corollary \ref{cor:NGO} can be applied. Then, since $\lambda=\gamma_2+\gamma_3=\frac{1}{4}<\frac{1}{2}=\frac{n-1}{4}$, for any initial point $s=\big(x_1^{(0)},x_2^{(0)},x_3^{(0)},u^{(0)}\big)\in R$ it holds that
		\[ \lim_{k\to\infty} \widetilde{V}^k_{\frac{1}{2},\frac{1}{4},0}(s)= \left\{
		\begin{array}{ll}
			\big(\frac{1}{2},0,0,\frac{1}{2}\big), & {\rm if} \ x_2^{(0)}=x_3^{(0)}=0, \medskip \\
			\left(\frac{1}{4},\frac{1}{4},\frac{1}{4},\frac{1}{4}\right), & {\rm otherwise.}  
		\end{array}
		\right.\]
		
	\end{example}

        \begin{interp}
            For some rodent populations, such as the \textit{Myopus schisticolor}  and \textit{Mus minutoides}, we can conclude that for any initial state $s\in R$ (the probability distribution on the set of possible genotypes$\{\text{XX},\text{XX*},\text{X*Y},\text{XY}\}$), the future of the population is always stable. If there are no XX* and X*Y individuals in the initial state, the population tends to the equilibrium state $\big(\frac{1}{2},0,0,\frac{1}{2}\big)$ where XX and XY are distributed equally. Otherwise, the population tends to $\big(\frac{1}{4},\frac{1}{4},\frac{1}{4},\frac{1}{4}\big)$, where all possible genotypes appear in the same proportion.
        \end{interp}

		\subsection{The Arctic lemming sex-determination system}\label{sec:artic_lemming}
	So far, we have only considered particular genetic examples which can be modelled by a gonosomal algebra obtained by Construction~\ref{const_1}. This subsection is devoted to studying the dynamical behaviour of a \textit{Dicrostonyx torquatus} (Artic lemming) population, which, as shown in Example \ref{ej:art_lem}, cannot be realised as the commutative duplicate of a baric algebra. Actually, we need to combine Constructions~\ref{const_1} and \ref{const_2}. Define the baric algebra $\mathcal{A}$ with basis $\{e_1,e_2,e_3\}$ and product given by $e_i^2=e_i$, $e_1e_i=\frac{1}{2}(e_1+e_i)$ and $e_2e_3=\frac{1}{2}(e_2+e_3)$ for any $i=1,2,3$. Then, we take the subspaces $F=\spa\{f_1=e_1\otimes e_1,f_2=e_1\otimes e_2,f_3=e_2\otimes e_3,f_4=e_3\otimes e_3\}$ and $M=\spa\{h=e_1\otimes e_3\}$ of $D(\mathcal{A})$. Notice that $f_4$ is not a possible genotype, but as the baric algebra $\mathcal{A}$ is defined, we need to include it to $\mu(F)\otimes\mu(M)\subset F\oplus M$. Finally, we just need to reduce the gonosomal basis $\{f_1,f_2,f_3,f_4,h\}$ by taking $I=\{4\}$ in Construction~\ref{const_2}. Therefore, the gonosomal algebra, which realises the results of crosses, has a gonosomal basis $\{f_1,f_2,f_3,h\}$ and its product is given by 
	\begin{align*}
		f_1h=\frac{1}{2}f_1+\frac{1}{2}h,\quad f_2h=\frac{1}{4}f_1+\frac{1}{4}f_2+\frac{1}{4}f_3+\frac{1}{4}h\quad\text{and}\quad
		f_3h=\frac{1}{3}f_2+\frac{1}{3}f_3+\frac{1}{3}h.
	\end{align*}
	Its associated gonosomal operator is
	\begin{equation}\label{op_artic}
		V\colon\left\{
		\begin{array}{rclcrcl}
			x_1'&=&\frac{1}{2} x_1u+\frac{1}{4}x_2u, &\quad& x_3'&=& \frac{1}{4}x_2u+\frac{1}{3}x_3u,\medskip \\
			x_2'&=&\frac{1}{4}x_2u+\frac{1}{3}x_3u, &\quad& u'&=&\frac{1}{2}x_1u+\frac{1}{4}x_2u+\frac{1}{3}x_3u;
		\end{array}
		\right.
	\end{equation}
	and its fixed points are given by the following result.
	\begin{proposition}\label{fix_points_artic}
		In the setting above, there exist two non-zero fixed points: $(2,0,0,2)$ and $(\frac{36}{25},\frac{12}{25},\frac{12}{25},\frac{12}{7})$.
	\end{proposition}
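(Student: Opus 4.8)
The plan is to solve the fixed-point system obtained by imposing $x_k'=x_k$ for $k=1,2,3$ and $u'=u$ in \eqref{op_artic}, proceeding along the same lines as the proof of Proposition~\ref{prop:fix_points_wolbachia}. First I would compare the equations for $x_2$ and $x_3$: both read $\tfrac14 x_2 u + \tfrac13 x_3 u$ on the right, so subtracting them gives $x_2 = x_3$ for every fixed point. This collapses the system to the three unknowns $x_1,x_2,u$.

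Next I would dispose of the degenerate case $u=0$: then all four right-hand sides vanish, forcing $x_1=x_2=x_3=0$, i.e.\ only the trivial fixed point, so from now on $u\neq 0$. Using $x_2=x_3$, the second equation becomes $x_2 = x_2 u\big(\tfrac14+\tfrac13\big)=\tfrac{7}{12}x_2 u$, whence either $x_2=0$ or $u=\tfrac{12}{7}$. This dichotomy drives the rest of the argument.

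In the branch $x_2=x_3=0$, the first equation reads $x_1\big(1-\tfrac{u}{2}\big)=0$; if $x_1=0$ the fourth equation forces $u=0$, which is excluded, so $u=2$, and then $u=\tfrac12 x_1 u$ gives $x_1=2$, producing $(2,0,0,2)$. In the branch $u=\tfrac{12}{7}$ with $x_2=x_3\neq 0$, the first equation reduces to $\tfrac17 x_1=\tfrac37 x_2$, i.e.\ $x_1=3x_2$; substituting $x_1=3x_2$, $x_3=x_2$, $u=\tfrac{12}{7}$ into the (nonzero) fourth equation and dividing by $u$ yields $1=\tfrac32 x_2+\tfrac{7}{12}x_2=\tfrac{25}{12}x_2$, so $x_2=\tfrac{12}{25}$ and the fixed point is $\big(\tfrac{36}{25},\tfrac{12}{25},\tfrac{12}{25},\tfrac{12}{7}\big)$. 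Since every case has been exhausted, these are precisely the two non-zero fixed points claimed.

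There is essentially no deep obstacle here: the computation is linear once $x_2=x_3$ is noted, and the only point requiring care is not to lose solutions when cancelling $x_2$ or $u$, which is why the preliminary observation $u\neq 0$ and the explicit case split on $x_2=0$ must be recorded rather than silently divided away.
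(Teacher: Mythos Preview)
Your proof is correct and follows essentially the same approach as the paper's: both reduce to $x_2=x_3$, split on whether $x_2=0$ (obtaining $(2,0,0,2)$) or $u=\tfrac{12}{7}$, and then solve the resulting linear system. The only cosmetic difference is that in the second branch the paper first uses the $u$-equation to express $x_1$ in terms of $x_2$ and then substitutes into the $x_1$-equation, whereas you do these two steps in the opposite order; the arithmetic and logic are otherwise identical.
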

	\begin{proof}
		As $x_2^\prime=x_3^\prime$, we need to solve the system of equations given by
		\[x_1=u\left(\frac{1}{2} x_1+\frac{1}{4}x_2\right),\quad x_2=\frac{7}{12}x_2u\quad\text{and}\quad u=u\left(\frac{1}{2}x_1+\frac{7}{12}x_2\right).\] 
		First, if $x_2=0$, it is easy to check that $x_1=u=0$ or $x_1=u=2$. Otherwise, we get $u=\frac{12}{7}$. Then, from the third equation, we get that $\frac{1}{2}x_1+\frac{7}{12}x_2=1$, or equivalently, $x_1=2-\frac{7}{6}x_2$. Changing this expression of $x_1$ in the first equation, we get $x_2=\frac{12}{25}$ and, consequently, $x_1=\frac{36}{25}$.
	\end{proof}
	Next, we consider the normalised version of \eqref{op_artic}, that is,
	\begin{equation}\label{op_artic_norm}
		\widetilde{V}\colon\left\{
		\begin{array}{rclcrcl}
			x_1'&=&\frac{u(6x_1+3x_2)}{12u(x_1+x_2+x_3)}, &\quad& x_3'&=& \frac{u(3x_2+4x_3)}{12u(x_1+x_2+x_3)},\medskip \\
			x_2'&=&\frac{u(3x_2+4x_3)}{12u(x_1+x_2+x_3)}, &\quad& u'&=&\frac{u(6x_1+3x_2+4x_3)}{12u(x_1+x_2+x_3)}.
		\end{array}
		\right.
	\end{equation}
	This normalised gonosomal operator has as fixed points $(\frac{1}{2},0,0,\frac{1}{2})$ and $(\frac{7}{20},\frac{7}{60},\frac{7}{60},\frac{5}{12})$. Moreover, it is easy to check that $\widetilde{V}$ maps $S^{3,1}$ to itself.
 
	\begin{proposition}\label{prop:equiv}
		Consider the normalised gonosomal operator $\widetilde{V}$ defined by \eqref{op_artic_norm} and an initial point $s^{(0)}=(x_1^{(0)},x_2^{(0)},x_3^{(0)},u^{(0)}) \in S^{3,1}$. Then, the following assertions are equivalent:
		\begin{enumerate}[\rm (i)]
			\item $6x_1^{(0)}\leq6x_2^{(0)}+12x_3^{(0)}$ (resp. $6x_1^{(0)}\geq6x_2^{(0)}+12x_3^{(0)}$);\smallskip
			\item $x_1^{(k)}\leq3x_2^{(k)}$ (resp. $x_1^{(k)}\geq3x_2^{(k)}$) for any $k\in \N^*$;\smallskip
			\item $x_2^{(k)}\geq\frac{7}{60}$  (resp. $x_2^{(k)}\leq\frac{7}{60}$) for any $k\geq2$; and\smallskip
			\item $x_2^{(k)}\geq\frac{7}{24}-\frac{1}{2}x_1^{(k)}$ (resp. $x_2^{(k)}\leq\frac{7}{24}-\frac{1}{2}x_1^{(k)}$) for any $k\geq2$.
		\end{enumerate}
	\end{proposition}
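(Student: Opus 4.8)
The plan is to collapse the four-coordinate dynamics of $\widetilde{V}$ onto a single scalar recursion and to read off all four conditions from it. Throughout write $\alpha^{(k)}:=x_1^{(k)}+x_2^{(k)}+x_3^{(k)}$. Since $x_2'=x_3'$ in \eqref{op_artic_norm} and $\widetilde{V}$ maps $S^{3,1}$ into itself, we have $x_2^{(k)}=x_3^{(k)}$, $\alpha^{(k)}=x_1^{(k)}+2x_2^{(k)}>0$ and $x_1^{(k)}+2x_2^{(k)}+u^{(k)}=1$ for every $k\ge 1$.

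First I would prove (i)$\Leftrightarrow$(ii). One application of \eqref{op_artic_norm} gives, for all $k\ge 1$,
\[
x_1^{(k+1)}-3x_2^{(k+1)}=\frac{x_1^{(k)}-3x_2^{(k)}}{2\alpha^{(k)}},
\]
so the sign of $x_1^{(k)}-3x_2^{(k)}$ is the same for every $k\ge 1$; and applying \eqref{op_artic_norm} once to $s^{(0)}$ shows that this common sign equals the sign of $6x_1^{(0)}-6x_2^{(0)}-12x_3^{(0)}$. Hence (i) holds if and only if $x_1^{(1)}\le 3x_2^{(1)}$, if and only if $x_1^{(k)}\le 3x_2^{(k)}$ for all $k\ge 1$, which is (ii); the versions with reversed inequalities are identical.

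The main obstacle is the step I carry out next, namely a linear reduction valid from the second iterate onward. Subtracting the $x_1'$-line from the $u'$-line of \eqref{op_artic_norm} and using $x_2^{(k)}=x_3^{(k)}$ ($k\ge 1$) yields $u^{(k+1)}-x_1^{(k+1)}=\tfrac{4}{7}x_2^{(k+1)}$, that is, $u^{(k)}=x_1^{(k)}+\tfrac{4}{7}x_2^{(k)}$ for all $k\ge 2$; combined with $x_1^{(k)}+2x_2^{(k)}+u^{(k)}=1$ this forces $x_1^{(k)}=\tfrac{1}{2}-\tfrac{9}{7}x_2^{(k)}$ and $\alpha^{(k)}=\tfrac{1}{2}+\tfrac{5}{7}x_2^{(k)}$ for every $k\ge 2$. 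Substituting the first identity, a direct computation gives, for each $k\ge 2$, the pointwise equivalences
\[
x_1^{(k)}\le 3x_2^{(k)}\ \Longleftrightarrow\ x_2^{(k)}\ge\tfrac{7}{60}\ \Longleftrightarrow\ x_2^{(k)}\ge\tfrac{7}{24}-\tfrac{1}{2}x_1^{(k)},
\]
and similarly with reversed inequalities; this yields (iii)$\Leftrightarrow$(iv) immediately, and reduces (ii)$\Leftrightarrow$(iii) to checking that $x_2^{(2)}\ge\tfrac{7}{60}$ already implies $x_2^{(k)}\ge\tfrac{7}{60}$ for every $k\ge 2$.

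To close that last point I would substitute $\alpha^{(k)}=\tfrac{1}{2}+\tfrac{5}{7}x_2^{(k)}$ into $x_2^{(k+1)}=\tfrac{7x_2^{(k)}}{12\alpha^{(k)}}$, obtaining the scalar recursion $x_2^{(k+1)}=g\bigl(x_2^{(k)}\bigr)$ with $g(t)=\tfrac{49t}{42+60t}$ for all $k\ge 2$; since $g$ is increasing on $[0,\infty)$ and $g(\tfrac{7}{60})=\tfrac{7}{60}$, the inequality $x_2^{(2)}\ge\tfrac{7}{60}$ (resp. $x_2^{(2)}\le\tfrac{7}{60}$) propagates to every $k\ge 2$. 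Chaining everything, (i)$\Leftrightarrow x_1^{(1)}\le 3x_2^{(1)}\Leftrightarrow$(ii)$\Leftrightarrow x_1^{(2)}\le 3x_2^{(2)}\Leftrightarrow x_2^{(2)}\ge\tfrac{7}{60}\Leftrightarrow$(iii)$\Leftrightarrow$(iv), and the reversed-inequality chains are obtained in exactly the same way. The one subtlety to keep in mind is that the linear reduction genuinely fails at $k=1$ --- there $x_2^{(0)}$ and $x_3^{(0)}$ need not coincide --- which is precisely why items (iii) and (iv) are stated only for $k\ge 2$.
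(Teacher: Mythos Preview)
Your argument is correct. The treatment of (i)$\Leftrightarrow$(ii) is essentially the same as the paper's, just packaged as the sign-preserving identity $x_1^{(k+1)}-3x_2^{(k+1)}=\tfrac{1}{2\alpha^{(k)}}\bigl(x_1^{(k)}-3x_2^{(k)}\bigr)$.

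Where you genuinely diverge from the paper is in (iii) and (iv). The paper never derives the affine constraint $x_1^{(k)}=\tfrac12-\tfrac{9}{7}x_2^{(k)}$; instead it just plugs $x_3^{(k)}=x_2^{(k)}$ into the recursion to get $x_2^{(k+1)}=\tfrac{7x_2^{(k)}}{12(x_1^{(k)}+2x_2^{(k)})}$ and checks in one line that $x_2^{(k+1)}\ge\tfrac{7}{60}$ (resp.\ $x_2^{(k+1)}\ge\tfrac{7}{24}-\tfrac12x_1^{(k+1)}$) is literally the inequality $x_1^{(k)}\le 3x_2^{(k)}$, which is already tied to (i). Your route---extracting $u^{(k)}-x_1^{(k)}=\tfrac{4}{7}x_2^{(k)}$ for $k\ge 2$, eliminating $x_1^{(k)}$, and reducing to the scalar map $g(t)=\tfrac{49t}{42+60t}$---is more elaborate but produces a clean one-variable recursion with fixed point $\tfrac{7}{60}$, which makes the later convergence theorem almost immediate. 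One minor redundancy: the propagation step via monotonicity of $g$ is not actually needed, since your sign-preservation from (i)$\Leftrightarrow$(ii) already forces the pointwise inequality $x_1^{(k)}\le 3x_2^{(k)}$ to hold either for all $k\ge 1$ or for none, and for $k\ge 2$ this is pointwise equivalent to $x_2^{(k)}\ge\tfrac{7}{60}$.
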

	\begin{proof}
		Let $s^{(0)}=(x_1^{(0)},x_2^{(0)},x_3^{(0)},u^{(0)}) \in S^{3,1}$.     We only prove the result for one inequality; the other is trivially analogue.
		First, we prove that (i) is equivalent to (ii). Notice that $6x_1^{(0)}<6x_2^{(0)}+12x_3^{(0)}$ is equivalent to $x_1^{(1)}<3x_2^{(1)}$. Indeed, adding $3x_2^{(0)}$ to both sides we have that
		\begin{align*}
			6x_1^{(0)}\leq6x_2^{(0)}+12x_3^{(0)}\Longleftrightarrow 6x_1^{(0)}+3x_2^{(0)}\leq3(3x_2^{(0)}+4x_3^{(0)})
			\Longleftrightarrow x_1^{(1)}\leq3x_2^{(1)}.
		\end{align*}
		Moreover, we claim that $x_1^{(k)}\leq3x_2^{(k)}$ is equivalent to $x_1^{(k+1)}\leq3x_2^{(k+1)}$ for any $k\in \N^*$. It holds that
		\begin{align*}
			x_1^{(k)}\leq3x_2^{(k)}\Longleftrightarrow 6x_1^{(k)}\leq18x_2^{(k)}\Longleftrightarrow 6x_1^{(k)}+3x_2^{(k)}\leq3(7x_2^{(k)})\Longleftrightarrow x_1^{(k+1)}\leq3x_2^{(k+1)},
		\end{align*}
		which completes the proof.
		
		Next, we prove that both (iii) and (iv) are equivalent to (ii). Just notice that for any $k\geq1$, we have that 
		\begin{gather*}
			x_2^{(k+1)}=\frac{7x_2^{(k)}}{12(x_1^{(k)}+2x_2^{(k)})}\geq\frac{7}{60}\Longleftrightarrow x_1^{(k)}\leq3x_2^{(k)}; \\
			x_2^{(k+1)}\geq\frac{7}{24}-\frac{1}{2}x_1^{(k+1)}\Longleftrightarrow\frac{7x_2^{(k)}}{12(x_1^{(k)}+2x_2^{(k)})}\geq\frac{7}{24}-\frac{6x_1^{(k)}+3x_2^{(k)}}{24(x_1^{(k)}+2x_2^{(k)})}
			\Longleftrightarrow x_1^{(k)}\leq3x_2^{(k)}.
		\end{gather*}
			\end{proof}
    \begin{remark}\label{rem:limit_u_2}
            Let be the operator $\widetilde{V}$  defined in \eqref{op_artic_norm} and assume that $\lim_{k\to\infty}x_1^{(k)}$,  
            $\lim_{k\to\infty}x_2^{(k)}$ and $\lim_{k\to\infty}x_3^{(k)}$ exist for any initial point $s=(x_1^{(0)},x_2^{(0)},x_3^{(0)},u^{(0)}) \in S^{3,1}$. Then,  notice that  $\lim_{k\to\infty}u^{(k)}\neq0$. Indeed, by contrary,  if $\lim_{k\to\infty}u^{(k)}=0$ then $\lim_{k\to\infty}6x_1^{(k)}+3x_2^{(k)}+4x_3^{(k)}=0$. Then, it necessarily holds that $\lim_{k\to\infty}x_1^{(k)}=\lim_{k\to\infty}x_2^{(k)}=\lim_{k\to\infty}x_3^{(k)}=0$, a contradiction with the fact that $x_1^{(k)}+x_2^{(k)}+x_3^{(k)}+u^{(k)}=1$ for any $k\geq0$.
        \end{remark}
	\begin{theorem}
		Consider the normalised gonosomal operator $\widetilde{V}$ given by \eqref{op_artic_norm}. Then, for any initial point $s=(x_1^{(0)},x_2^{(0)},x_3^{(0)},u^{(0)}) \in S^{3,1}$, it holds that
		\[
		\lim_{k \to \infty} \widetilde{V}^k(s)= \left\{
		\begin{array}{ll}
			(\frac{1}{2},0,0,\frac{1}{2}), & {\rm if} \ x_2^{(0)}=x_3^{(0)}=0, \smallskip \\
			(\frac{7}{20},\frac{7}{60},\frac{7}{60},\frac{5}{12}), & {\rm otherwise.}  
		\end{array}
		\right.
		\]
	\end{theorem}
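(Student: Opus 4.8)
The plan is to dispose of the degenerate case at once and then reduce the generic case to a monotone-sequence argument built on Proposition~\ref{prop:equiv} and Remark~\ref{rem:limit_u_2}. If $x_2^{(0)}=x_3^{(0)}=0$, then membership in $S^{3,1}$ forces $x_1^{(0)}>0$ and $x_1^{(0)}+u^{(0)}=1$, and substituting into \eqref{op_artic_norm} gives $\widetilde V(s)=\left(\frac12,0,0,\frac12\right)$, which is a fixed point of $\widetilde V$; hence $\widetilde V^{k}(s)=\left(\frac12,0,0,\frac12\right)$ for all $k\ge 1$ and the first alternative follows.

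So assume $x_2^{(0)}\ne 0$ or $x_3^{(0)}\ne 0$. Then $x_2^{(1)}=x_3^{(1)}=\frac{3x_2^{(0)}+4x_3^{(0)}}{12(x_1^{(0)}+x_2^{(0)}+x_3^{(0)})}>0$, and using $x_2^{(k)}=x_3^{(k)}$ together with $x_2^{(k+1)}=\frac{7x_2^{(k)}}{12(x_1^{(k)}+2x_2^{(k)})}$ for $k\ge 1$, an immediate induction shows $x_2^{(k)}>0$ for every $k\ge 1$. The key observation is that, for $k\ge 1$,
\[
x_2^{(k+1)}\le x_2^{(k)}\iff x_1^{(k)}+2x_2^{(k)}\ge\frac{7}{12}\iff x_2^{(k)}\ge\frac{7}{24}-\frac12 x_1^{(k)},
\]
and similarly with all inequalities reversed, the rightmost inequality being precisely part~(iv) of Proposition~\ref{prop:equiv}. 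I would therefore split according to the sign of $6x_1^{(0)}-6x_2^{(0)}-12x_3^{(0)}$: in the ``$\le$'' case parts~(iii)--(iv) of Proposition~\ref{prop:equiv} give that $\{x_2^{(k)}\}_{k\ge 2}$ is non-increasing and bounded below by $\frac{7}{60}$, and in the ``$\ge$'' case it is non-decreasing and bounded above by $\frac{7}{60}$. In either case $\{x_2^{(k)}\}$ converges to some $L$, with $L\ge\frac{7}{60}>0$ in the first case and $L\ge x_2^{(2)}>0$ in the second, so $x_2^{(k)}=x_3^{(k)}\to L$ with $L>0$.

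To promote this to convergence of the whole trajectory, I would solve the recursion for $x_1^{(k)}$: since $x_2^{(k+1)}>0$, one has $x_1^{(k)}=\frac{7x_2^{(k)}}{12x_2^{(k+1)}}-2x_2^{(k)}\to\frac{7}{12}-2L$, whence $u^{(k)}=1-x_1^{(k)}-2x_2^{(k)}$ also converges; note that $x_1^{(k)}+x_2^{(k)}+x_3^{(k)}\to\frac{7}{12}>0$, so the limit lies in the region where $\widetilde V$ is continuous. By Remark~\ref{rem:limit_u_2} the limit of $\{u^{(k)}\}$ is nonzero, so passing to the limit in \eqref{op_artic_norm} shows that $\lim_{k\to\infty}\widetilde V^{k}(s)$ is a fixed point of $\widetilde V$; since its second coordinate equals $L>0$ it cannot be $\left(\frac12,0,0,\frac12\right)$, hence it must be $\left(\frac{7}{20},\frac{7}{60},\frac{7}{60},\frac{5}{12}\right)$ (in particular $L=\frac{7}{60}$), as claimed.

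Most of the algebraic equivalences are routine and are already recorded in Proposition~\ref{prop:equiv}, so the substantive content is identifying the monotone quantity $\{x_2^{(k)}\}$ and the right region split. The point that needs care is checking $L>0$ in the non-decreasing subcase, since this is exactly what makes the recursion solvable for $\lim_{k\to\infty}x_1^{(k)}$ and lets one discard the ``male-only'' fixed point $\left(\frac12,0,0,\frac12\right)$; I expect that verification to be the main (though mild) obstacle.
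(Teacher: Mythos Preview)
Your proposal is correct and follows essentially the same approach as the paper: handle the degenerate case directly, then use Proposition~\ref{prop:equiv} to split on the sign of $6x_1^{(0)}-6x_2^{(0)}-12x_3^{(0)}$ and obtain monotonicity and boundedness of $\{x_2^{(k)}\}_{k\ge 2}$, recover $x_1^{(k)}$ from the recursion, and invoke Remark~\ref{rem:limit_u_2} to identify the limit as a fixed point. Your write-up is in fact slightly more explicit than the paper's about why $L>0$ in the non-decreasing subcase (namely $L\ge x_2^{(2)}>0$), which is exactly the point needed to rule out the fixed point $\bigl(\tfrac12,0,0,\tfrac12\bigr)$.
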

	\begin{proof}
		First, notice that if $x_2^{(0)}=x_3^{(0)}=0$, then clearly $\widetilde{V}^n(s)=(\frac{1}{2},0,0,\frac{1}{2})$ for any $n\in\mathbb{N}$ and so $\lim_{n \to \infty} \widetilde{V}^n(s)=(\frac{1}{2},0,0,\frac{1}{2})$.
		
		Otherwise, we can ensure that $x_2^{(k)}\neq0$ for any $k\in \N^*$. Moreover, after some computations, it is easy to check that $x_2^{(k+1)}\geq x_2^{(k)}$ (resp. $x_2^{(k+1)}\leq x_2^{(k)}$) for any $k\in \N^*$ if and only if $x_2^{(k)}\leq\frac{7}{24}-\frac{1}{2}x_1^{(k)}$ (resp. $x_2^{(k)}\geq\frac{7}{24}-\frac{1}{2}x_1^{(k)}$) for any $k\in \N^*$. Then, by Proposition \ref{prop:equiv}, for any $k\in \N^*$, we have that
		\begin{align*}
			x_2^{(k)}\left\{
			\begin{array}{ll}
				\geq\frac{7}{60}, & {\rm if} \ 6x_1^{(0)} \leq 6x_2^{(0)}+12x_3^{(0)}, \medskip \\
				\leq\frac{7}{60}, & {\rm if} \ 6x_1^{(0)} \geq 6x_2^{(0)}+12x_3^{(0)},
			\end{array}
			\right.
			\quad\text{and}\quad
			x_2^{(k+1)}\left\{ 
			\begin{array}{ll}
				\leq x_2^{(k)}, & {\rm if} \ 6x_1^{(0)} \leq 6x_2^{(0)}+12x_3^{(0)}, \medskip \\
				\geq x_2^{(k)}, & {\rm if} \ 6x_1^{(0)} \geq 6x_2^{(0)}+12x_3^{(0)},
			\end{array}
			\right. 
		\end{align*}
            Consequently, in both cases, we have that $\{x_2^{(k)}\}_{k\in \N^*}$ is a monotone bounded sequence; its limit exists and, moreover, is a positive number.  Furthermore, since 
            \[
            x_1^{(k)}=\frac{7 x_2^{(k)}}{12 x_2^{(k+1)}}-2x_2^{(k)}\quad\text{and}\quad u^{(k+1)}=\frac{6x_1^{(k)}+7x_2^{(k)}}{12(x_1^{(k)}+2x_2^{(k)})}
            \]
            for any $k\in \N^*$, we deduce that $\{x_1^{(k)}\}_{k\in \N^*}$ and $\{u^{(k)}\}_{k\in \N^*}$ converge. As a consequence of Remark \ref{rem:limit_u_2}, if we take limits on both sides of the expressions which define the operator \eqref{op_artic_norm}, we have that the possible limits are exactly its fixed points: $(\frac{1}{2},0,0,\frac{1}{2})$ and $(\frac{7}{20},\frac{7}{60},\frac{7}{60},\frac{5}{12})$. Hence, as the limit of $\{x_2^{(k)}\}_{k\in \N^*}$ is positive, necessarily $\lim_{k \to \infty} \widetilde{V}^k(s)=(\frac{7}{20},\frac{7}{60},\frac{7}{60},\frac{5}{12})$.
 
        	\end{proof}  
	
	\begin{interp}
		For \textit{Dicrostonyx torquatus} population, given the previous result, we can conclude that for any initial state $s\in S^{3,1}$ (the probability distribution on the set of possible genotypes $\{\text{XX},\text{XX*},\text{X*Y},\text{XY}\}$), the future of the population is always stable. If there are no XX* and X*Y individuals in the initial state, the population tends to the equilibrium state $(\frac{1}{2},0,0,\frac{1}{2})$, where XX and XY are distributed equally. Otherwise, the population tends to the equilibrium state $(\frac{7}{20},\frac{7}{60},\frac{7}{60},\frac{5}{12})$, where the first female genotype and the male genotype are the most frequent.
	\end{interp}

	\section{A combination of XY systems and ZW systems}\label{pez}
	
In this section, we describe the dynamic behaviour of some African cichlid fish populations [see \cite{MR_13,PS_12}], which has not yet been modelled.
		 These species with polygenic sex determination (see \cite{MR_13}) have a multi-locus system, where alleles at an XY locus on chromosome seven and a ZW locus on chromosome five segregate independently. Most importantly, the W allele overrides the Y male determiner, so ZWXY individuals are females. Hence, when a female with a ZW sex determiner is mated to a male with an XY sex determiner, they produce siblings with four possible sex classes: ZZXX, ZWXX and ZWXY females, and ZZXY males. That is, 
		\begin{center}
			\begin{tabular}{rcl}
				$\text{ZWXX}\times \text{ZZXY}$&$\rightarrow$&$\frac{1}{4}\text{ZZXX},\frac{1}{4}\text{ZZXY},\frac{1}{4}\text{ZWXX},\frac{1}{4}\text{ZWXY}$.
			\end{tabular}
		\end{center}
		
		As shown in \cite{PS_12}, many other genotypes and crosses with different outcomes are possible. However, we will consider a simplified version in which the only female and male genotypes are the previous ones. Moreover, to show such dominance of W over Y, we will assume that W causes the elimination of Y during gametogenesis. Hence, the remaining crosses are:
		\begin{center}
			\begin{tabular}{rcl}
				$\text{ZZXX}\times \text{ZZXY}$&$\rightarrow$&$\frac{1}{2}\text{ZZXX},\frac{1}{2}\text{ZZXY}$; \ and \\
				$\text{ZWXY}\times \text{ZZXY}$&$\rightarrow$&$\frac{1}{4}\text{ZZXX},\frac{1}{4}\text{ZZXY},\frac{1}{4}\text{ZWXX},\frac{1}{4}\text{ZWXY}$.
			\end{tabular}
		\end{center}
\noindent		
In order to build the corresponding gonosomal algebra, we consider two spaces $A$ and $B$, with bases $\{a_1,a_2\}$ and $\{b_1,b_2\}$, respectively. Then, the space $\mathcal{A}=A\otimes B$ with basis $\{ e_{(i,j)}=a_i \otimes b_j\}_{i,j \in \{1,2\}}$ and the multiplication given by 
		\[ e_{(i,j)}e_{(k,l)}= \left\{
		\begin{array}{ll}
			\frac{1}{4}\big(e_{(i,j)}+e_{(i,l)}+e_{(k,j)}+e_{(k,l)}\big), & {\rm if} \ (i,j),(k,l)\neq(2,2), \smallskip \\
			\frac{1}{2}\big(e_{(i,1)}+e_{(2,1)}\big), & {\rm otherwise;}  
		\end{array}
		\right.
		\]
		is a baric algebra. Next, we define 
        \[
		I=\spa\big\{e_{(i,j)}\otimes e_{(k,l)}-e_{(k,l)}\otimes e_{(i,j)},e_{(i,j)}\otimes e_{(k,l)}-e_{(i,l)}\otimes e_{(k,j)}\colon i,j,k,l\in \{1,2\}\big\},
		\]
        and take the subspaces $F=\spa\{ e_{(1,1)}\otimes e_{(1,1)},e_{(1,1)}\otimes e_{(2,1)},e_{(1,1)}\otimes e_{(2,2)}\}$ and $M=\spa\{ e_{(1,1)}\otimes e_{(1,2)}\}$ of the quotient $(\mathcal{A}\otimes\mathcal{A})/I$.
		So, analogously to Construction \ref{const_1}, this allows us to obtain a gonosomal algebra with gonosomal basis $\{f_1=e_{(1,1)}\otimes e_{(1,1)},f_2=e_{(1,1)}\otimes e_{(2,1)},f_3=e_{(1,1)}\otimes e_{(2,2)},h=e_{(1,1)}\otimes e_{(1,2)}\}$ and product given by
		\begin{align}\label{al_af_fish}
			f_1h=\frac{1}{2}f_1+\frac{1}{2}h\quad\text{and}\quad
			f_2h=f_3h=\frac{1}{4}f_1+\frac{1}{4}f_2+\frac{1}{4}f_3+\frac{1}{4}h.
		\end{align}
		Using the coding $a_1\leftrightarrow \text{Z}$, $a_2\leftrightarrow \text{W}$, $b_1\leftrightarrow \text{X}$ and $b_2\leftrightarrow \text{Y}$, we obtain the desired frequency distribution of crosses. Furthermore, the corresponding (normalised) gonosomal operators are given by 
		\begin{equation}\label{op_af_fish}
			V\colon\left\{
			\begin{array}{ll}
				x_1'=u'=\frac{1}{2}x_1u+\frac{1}{4}x_2u,+\frac{1}{4}x_3u,\medskip\\
				x_2'=x_3'=\frac{1}{4}x_2u+\frac{1}{4}x_3u;
			\end{array}\right.
			\quad\text{and}\quad
			\widetilde{V}\colon\left\{
			\begin{array}{ll}
				x_1'=u'=\frac{2x_1+x_2+x_3}{4(x_1+x_2+x_3)},\medskip\\
				x_2'=x_3'=\frac{x_2+x_3}{4(x_1+x_2+x_3)}.
			\end{array}\right.
		\end{equation}
		Notice that if $n=3$, $\gamma_1=\frac{1}{2}$ and $\gamma_2=\gamma_3=\frac{1}{4}$, then the gonosomal algebra \eqref{al_gen_musminutoides}, the gonosomal operator \eqref{gen_musminutoides} and the normalised gonosomal operator \eqref{norm_musminutoides} correspond exactly with \eqref{al_af_fish} and \eqref{op_af_fish}, respectively. Now, by Lemma \ref{lem:equiv_inv}, it is clear that $S^{3,1}$ is invariant with respect to $\widetilde{V}$. Moreover, as $\gamma_1=\frac{1}{2}$ then, by Corollary \ref{cor:GO_norm}, all non-zero fixed points are non-negative. Consequently, Theorem \ref{th:lim_musminutoides_gen} and Corollary \ref{cor:NGO} can be applied. Then, since $\lambda=\gamma_2+\gamma_3=\frac{1}{2}\geq\frac{1}{2}=\frac{n-1}{4}$, for any initial point $s=\big(x_1^{(0)},x_2^{(0)},x_3^{(0)},u^{(0)}\big)\in S^{3,1}$ it holds that
		\[ 
		\lim_{k\to\infty} \widetilde{V}_{\frac{1}{2},\frac{1}{4},0}^k(s)=\left(\frac{1}{2},0,0,\frac{1}{2}\right).
		\]

	\begin{interp}
		For the African cichlid fish population, we can conclude that for any initial state $s\in S^{3,1}$ (the probability distribution on the set of possible genotypes $\{\text{ZZXX},\text{ZWXX},\text{ZWXY},\text{ZZXY}\}$), the future of the population always tends to the equilibrium state $\big(\frac{1}{2},0,0,\frac{1}{2}\big)$, that is, ZZXX and ZZXY individuals will survive in the same proportion, but ZWXX and ZWXY individuals will disappear in the future.
	\end{interp}

    \section*{Acknowledgements}
	The first author is supported by the Agencia Estatal de Investigaci\'on (Spain) through project  PID2023-152673NB-I00 and by the Junta de Andaluc\'{\i}a  through project  FQM-336,  both  with FEDER funds. This author thanks the Universidade de Santiago de Compostela 
 for their hospitality and generosity. The second and the third authors are supported by Agencia Estatal de Investigaci\'on (Spain), grant PID2020-115155GB-I00 (European FEDER support included, UE) and
by Xunta de Galicia through the Competitive Reference Groups (GRC), ED431C
2023/31.
The third author is also supported by FPU21/05685 scholarship, Ministerio de Educaci\'on y Formaci\'on Profesional (Spain).

\section*{Declarations}

\subsection*{Ethical Approval:}

This declaration is not applicable.

\subsection*{Conflicts of interests/Competing interests:} We have no conflicts of interests/competing interests to disclose.

\subsection*{Authors' contributions:}

All authors contributed equally to this work. 

\subsection*{Data Availability Statement:} The authors confirm that the data supporting the findings of this study are available within the article.
\vskip 1cm


\begin{thebibliography}{10}

\bibitem{A_19_asymp}
{\sc A.~T. Absalamov}, {\em Asymptotical behavior of trajectories for an
  evolution operator}, Uzbek Mathematical Journal, 4 (2019), pp.~4--11.

\bibitem{A_20_eigen}
{\sc A.~T. Absalamov}, {\em On the eigenvalues
  of a gonosomal evolution operator.}, Uzbek Mathematical Journal, 4 (2020),
  pp.~4--10.

\bibitem{A_21_attract}
{\sc A.~T. Absalamov}, {\em The global
  attractiveness of the fixed point of a gonosomal evolution operator},  Discontinuity, Nonlinearity, and Complexity, 10
  (2021), pp.~143--149.

\bibitem{AR_20}
{\sc A.~T. Absalamov and U.~A. Rozikov}, {\em The dynamics of gonosomal
  evolution operators}, Journal of Applied Nonlinear Dynamics, 9 (2020),
  pp.~247--257.

\bibitem{B_11_popdyn}
{\sc N.~Baca\"er}, {\em A short history of mathematical population dynamics},
  Springer-Verlag London, Ltd., London, 2011.

\bibitem{CMFRB_04}
{\sc R.~Cordaux, A.~Michel-Salzat, M.~Frelon-Raimond, T.~Rigaud, and
  D.~Bouchon}, {\em Evidence for a new feminizing wolbachia strain in the
  isopod armadillidium vulgare: evolutionary implications}, Heredity, 93
  (2004), pp.~78--84.

\bibitem{Etherington_40}
{\sc I.~M.~H. Etherington}, {\em Genetic algebras}, Proceedings of the Royal
  Society of Edinburgh, 59 (1940), pp.~242--258.

\bibitem{Etherington_41_Dup}
{\sc I.~M.~H. Etherington}, {\em Duplication of
  linear algebras}, Proceedings of the Edinburgh Mathematical Society, 6
  (1941), pp.~222--230.

\bibitem{Etherington_41}
{\sc I.~M.~H. Etherington}, {\em Non-associative
  algebra and the symbolism of genetics}, Proceedings of the Royal Society of
  Edinburgh, Section B: Biological Sciences, 61 (1941), pp.~24--42.

  \bibitem{LLR_14}
{\sc A.~Labra, M.~Ladra, and U.~A. Rozikov}, {\em An evolution algebra in
  population genetics}, Linear Algebra Appl., 457 (2014), pp.~348--362.

\bibitem{LR_13}
{\sc M.~Ladra and U.~A. Rozikov}, {\em Evolution algebra of a bisexual
  population}, Journal of Algebra, 378 (2013), pp.~153--172.

\bibitem{Lyubich_92}
{\sc Y.~I. Lyubich}, {\em Mathematical
  structures in population genetics}, Biomathematics, vol.~22, Springer, 1992.

\bibitem{MABDS_03}
{\sc J.~Marchal, M.~Acosta, M.~Bullejos, R.~Díaz de~la Guardia, and
  A.~Sánchez}, {\em Sex chromosomes, sex determination, and sex-linked
  sequences in microtidae}, Cytogenetic and Genome Research, 101 (2003),
  pp.~266--273.

\bibitem{MR_13}
{\sc E.~C. Moore and R.~B. Roberts}, {\em Polygenic sex determination}, Current
  Biology, 23 (2013), pp.~R510--R512.

\bibitem{PS_12}
{\sc N.~F. Parnell and J.~T. Streelman}, {\em Genetic interactions controlling
  sex and color establish the potential for sexual conflict in {L}ake {M}alawi
  cichlid fishes}, Heredity, 110 (2012), pp.~239--246.

\bibitem{Reed_97}
{\sc M.~Reed}, {\em Algebraic structure of genetic inheritance}, Bulletin of
  the American Mathematical Society, 34 (1997), pp.~107--130.

\bibitem{R_19}
{\sc U.~A. Rozikov}, {\em Population Dynamics: Algebraic and Probabilistic
  Approach}, World Scientific,  2019.

\bibitem{RSV_24}
{\sc U.~A. Rozikov, S.~Shoyimardonov, and R.~Varro}, {\em Gonosomal algebras
  and associated discrete-time dynamical systems}, Journal of Algebra, 638
  (2024), pp.~153--188.

\bibitem{RV_15}
{\sc U.~A. Rozikov and R.~Varro}, {\em Dynamical systems generated by a
  gonosomal evolution operator}, 
  Discontinuity, Nonlinearity and Complexity, 5 (2016),  pp.~173--185.

\bibitem{Tian_08}
{\sc J.~P. Tian}, {\em Evolution algebras and their applications}, vol.~1921 of
  Lecture Notes in Mathematics, Springer, Berlin, 2008.

\bibitem{TV_06}
{\sc J.~P. Tian and P.~Vojt{\v{e}}chovsk{\'y}}, {\em Mathematical concepts of
  evolution algebras in non-{M}endelian genetics}, Quasigroups Related Systems,
  14 (2006), pp.~111--122.

\bibitem{V_19_temperature}
{\sc N.~Valenzuela and V.~A. Lance}, {\em Temperature-dependent sex
  determination in vertebrates}, Smithsonian Institution Scholarly Press, 2019.

\bibitem{V_16}
{\sc R.~Varro}, {\em Gonosomal algebra}, Journal of Algebra, 447 (2016),
  pp.~1--30.

\bibitem{VCCJal_09}
{\sc F.~Veyrunes, P.~Chevret, J.~Catalan, R.~Castiglia, J.~Watson, G.~Dobigny,
  T.~J. Robinson, and J.~Britton-Davidian}, {\em A novel sex determination
  system in a close relative of the house mouse}, Proceedings of the Royal
  Society B: Biological Sciences, 277 (2009), pp.~1049--1056.

\bibitem{WB_80}
{\sc A.~W\"orz-Busekros}, {\em Algebras in genetics},  Lecture Notes
  in Biomathematics, vol.~36, Springer-Verlag, Berlin-New York, 1980.

\end{thebibliography}
\end{document}